\numberwithin{equation}{section} \hyphenation{semi-stable}
\font\tengothic=eufm10 scaled\magstep 1 \font\sevengothic=eufm7
\def\goth#1{{\fam\gothicfam #1}}
\newtheorem{theorem}{Theorem}[section]
\newtheorem{proposition}[theorem]{Proposition}
\newtheorem{corollary}[theorem]{Corollary}
\newtheorem{conjecture}[theorem]{Conjecture}
\theoremstyle{definition}
\newtheorem{remark}[theorem]{Remark}
\newtheorem{example}[theorem]{Example}
\newcommand{\codim}{\operatorname{codim}}
\newcommand{\coker}{\operatorname{coker}}
\newcommand{\Ann}{\operatorname{Ann}}
\newcommand{\Hom}{\operatorname{Hom}}
\newcommand{\Ext}{\operatorname{Ext}}
\newcommand{\depth}{\operatorname{depth}}
\newcommand{\Hi}{\operatorname{Hilb}}
\newcommand{\Proj}{\operatorname{Proj}}
\newcommand{\proj}[1]
{ \mathchoice
            { {\mathbb P}^{#1} }
            { {\mathbb P}^{#1} }
            { {\mathbb P}^{#1} }
            { {\mathbb P}^{#1} }
          }
\newcommand{\cD}{{\mathcal D}}
\newcommand{\cA}{{\mathcal A}}
\newcommand{\cB}{{\mathcal B}}
\newcommand{\cH}{{\mathcal H}}
\newcommand{\cI}{{\mathcal I}}
\newcommand{\cM}{{\mathcal M}}
\newcommand{\cC}{{\mathcal C}}
\newcommand{\cF}{{\mathcal F}}
\newcommand{\cG}{{\mathcal G}}
\newcommand{\cO}{{\mathcal O}}
\newcommand{\cN}{{\mathcal N}}
\newcommand {\PP}{\mathbb{P}}
\newcommand {\ra}{\longrightarrow}
\begin{document}
\title[Families of determinantal schemes]{Families of determinantal schemes}

\author[Jan O.\ Kleppe, Rosa M.\ Mir\'o-Roig]{Jan O.\ Kleppe, Rosa M.\ Mir\'o-Roig$^{*}$} 
\address{Faculty of Engineering,
         Oslo University College,
         Pb. 4 St. Olavs plass, N-0130 Oslo,
         Norway}
\email{JanOddvar.Kleppe@iu.hio.no}
\address{Facultat de Matem\`atiques,
Departament d'Algebra i Geometria, Gran Via de les Corts Catalanes
585, 08007 Barcelona, SPAIN } \email{miro@ub.edu}

\date{\today}

\thanks{$^*$ Partially supported by MTM2010-15256 .}
\subjclass{Primary 14M12, 14C05, 14H10, 14J10}


\begin{abstract}  Given integers $a_0\le a_1\le \cdots  \le a_{t+c-2}$ and
$b_1\le \cdots \le b_t$, we denote by
$W(\underline{b};\underline{a})\subset \Hi ^p(\PP^{n})$ the locus
of good  determinantal schemes $X\subset \PP^{n}$ of codimension
$c$ defined by the maximal minors of a $t\times (t+c-1)$
homogeneous matrix with entries homogeneous polynomials of degree
$a_j-b_i$. The goal of this paper is to extend and complete
the results given by the authors in \cite{KM} and determine under
weakened  numerical assumptions the dimension of
$W(\underline{b};\underline{a})$ as well as whether the closure of
$W(\underline{b};\underline{a})$  is a generically smooth
irreducible component of $\Hi ^p(\PP^{n})$.
 \end{abstract}


\maketitle



\section{Introduction} \label{intro}

In this paper, we will deal with good and standard determinantal schemes. A
scheme $X\subset \PP^{n}$ of codimension $c$ is called {\em standard
  determinantal} if its homogeneous saturated ideal can be generated by the
maximal minors of a homogeneous $t \times (t+c-1)$ matrix, and $X$ is said to
be {\em good determinantal} if it is standard determinantal and a generic
complete intersection. We denote the Hilbert scheme by $\Hi ^p(\PP^n)$. Given
integers $a_0\le a_1\le \cdots \le a_{t+c-2}$ and $b_1\le \cdots \le b_t$, we
denote by $W(\underline{b};\underline{a})\subset \Hi ^p(\PP^{n})$ (resp.
$W_s(\underline{b};\underline{a})$) the locus of good (resp. standard)
determinantal schemes $X\subset \PP^{n}$ of codimension $c$ defined by the
maximal minors of a $t\times (t+c-1)$ homogeneous matrix with entries
homogeneous polynomials of degree $a_j-b_i$.

\vskip 2mm  In \cite{KMMNP} and \cite{KM}, we  addressed the
following 3 crucial problems:
\begin{itemize}
\item[(1)] To determine  the dimension of
$W(\underline{b};\underline{a})$  in terms of $a_j$ and $b_i$,
\item[(2)] Is the closure of $W(\underline{b};\underline{a})$  an
irreducible component of $\Hi ^p(\PP^{n})$? and \item[(3)] Is $\Hi
^p(\PP^{n})$ generically smooth along
$W(\underline{b};\underline{a})$?
\end{itemize}

In \cite{KM} we obtained an upper bound for $\dim
W(\underline{b};\underline{a})$ in terms of $a_j$ and $b_i$ which was achieved
in the cases $2\le c\le 5$ and $n-c>0$ (assuming $char (k)=0$ if $c=5$), and
in codimension $c>5$ provided certain numerical conditions are satisfied (See
\cite{KM}, Theorems 3.5 and 4.5; and Corollaries 4.7, 4.10 and 4.14).
Concerning problems (2) and (3), we gave in \cite{KM} an affirmative answer to
both questions in the range $2\le c \le 4$ and $n-c\ge 2$, and in the cases
$c\ge 5$ and $n-c\ge 1$ provided certain numerical assumptions are verified
(See \cite{KM}, Corollaries 5.3, 5.7, 5.9 and 5.10. See also \cite{elli},
\cite{KMMNP} for the cases $2 \le c \le 3$). Note that since every element of
$W(\underline{b};\underline{a})$ has the same Hilbert function, the assumption
$n>c$ is close to being necessary for problem (2). Indeed if $n=c$ the
problems (2) and (3) become more natural provided we replace $\Hi ^p(\PP^{n})$
by the postulation Hilbert scheme, see \cite{K09}.

\vskip 2mm In this work we attempt to extend and complete the results of
\cite{KMMNP} and \cite{KM}. Indeed if $ a_{t+3}> a_{t-2}$ we almost solve
problem (1) in Theorem~\ref{codcdim0} while Theorem~\ref{codcomp} and
Corollary~\ref{cod6}, for $c > 4$, generalize results of \cite{KM} for the
problems (2) and (3) substantially. To prove these results we use induction on
the codimension by successively deleting the columns of the highest degree and
the Eagon-Northcott complex 
associated to a standard determinantal scheme. We also use the theory of Hilbert
flag schemes and the depth of certain mixed determinantal schemes (see Theorem
\ref{dim-mix-det}). We end the paper with two conjectures which are supported
by our results and by a huge number of examples computed using Macaulay 2.

\vskip 2mm  \noindent {\bf Notation:} Throughout this paper
$\PP^n$ is the $n$-dimensional projective space over an
algebraically closed field $k$, $R=k[x_0, x_1, \dots ,x_n]$  and
$\goth m= (x_0, \dots ,x_n)$. By $\cH om_{\cO_X}(\cF,\cG)$ we
denote the sheaf of local morphisms between coherent
$\cO_X$-modules while $\Hom(\cF,\cG)$ denotes the group of
morphisms from $\cF$ to $\cG$. Moreover we set
$\hom(\cF,\cG)=\dim_k\Hom(\cF,\cG)$ and we correspondingly use
small letters for the dimension, as a $k$-vector space, of similar
groups. For any quotient $A$ of $R$ of codimension $c$, we let
$K_A=\Ext^c_R (A,R)(-n-1)$.

In the sequel, $_{\mu }\!\Hom_{R}(M,N)$ denotes homomorphisms of degree $\mu $
of graded $R$-modules. Moreover, we denote the Hilbert scheme by $\Hi
^p(\PP^n)$, $p$ the Hilbert polynomial, and $(X) \in \Hi ^p(\PP^n)$ the point
which corresponds to the subscheme $X\subset \PP^n$ with Hilbert polynomial
$p$. We denote by $I_X$ the saturated homogeneous ideal of $X \subset \PP^n$.
We say that $X$ is {\it general} in some irreducible subset $W \subset \Hi
^p(\PP^n)$ if $(X)$ belongs to a sufficiently small
open subset  $U$ of $W$ (small enough so that any $(X) \in U$ has all the
openness properties that we want to require).


\section{Preliminaries}

This section provides the background and basic results on standard
determinantal ideals, good determinantal ideals and mixed
determinantal ideals needed in the sequel. We refer to \cite{b-v}, \cite{eise}
\cite{KM}  and \cite{M} for the details.

\vskip 2mm Let $\cA=(f_{ij})_{i=1,...t}^{j=0,...,t+c-2}$, $\deg
f_{ij}=a_j-b_{i}$, be a $t\times (t+c-1)$ homogeneous matrix and
let
\begin{equation}\label{gradedmorfismo} \varphi:F=\bigoplus
  _{i=1}^tR(b_i)\longrightarrow G:=\bigoplus_{j=0}^{t+c-2}R(a_j)
\end{equation}
be the graded morphism of free $R$-modules represented by the transpose, $\cA
^{t}$, of $\cA$. Let $I(\cA)=I_t(\cA)$ be the ideal of $R$ generated by the
maximal minors of $\cA$. A codimension $c$ subscheme $X\subset \PP^{n}$ is
said to be {\em standard determinantal} if $I_X=I(\cA)$ for some homogeneous
$t\times (t+c-1)$ matrix $\cA$ as above. Moreover $X$ is \emph{good
  determinantal} if $X$ is standard determinantal and a generic complete
intersection in $ \PP^{n}$ (\cite{KMNP}, Theorem 3.4). In this paper we
suppose $c\ge 2$, $t\ge 2$, $b_1 \le ... \le b_t$ and $ a_0 \le a_1\le ... \le
a_{t+c-2}$. (Note that the case $t=1$ for determinantal schemes corresponds to
the well-known complete
intersections). 

\vskip 2mm Let $W(\underline{b};\underline{a})$ (resp.
$W_s(\underline{b};\underline{a}))$ be the stratum in $ {\rm Hilb}
^p(\PP^{n})$ consisting of good (resp. standard) determinantal schemes as
above. Since our definition does not assume $\cA$ to be minimal (i.e.
$f_{ij}=0$ when $b_{i}=a_{j}$) for $X=\Proj(R/I_{t}(\cA))\in
W(\underline{b};\underline{a})$ (or $W_s(\underline{b};\underline{a})$), we
must reconsider Corollary 2.6 of \cite{KM} where $\cA$ was supposed minimal in
the proof (a slight correction to \cite{KMMNP} and \cite{KM}!). We may,
however, use that proof to see that
\begin{equation} \label{WWs} W(\underline{b};\underline{a}) \ne \emptyset \ \
  \Leftrightarrow \ \ W_s(\underline{b};\underline{a}) \ne \emptyset \ \
  \Leftrightarrow \ \ a_{i-1} \ge b_i \ \ {\rm for \ all} \ i \ {\rm and \ }
    a_{i-1} > b_i \ \ {\rm for \ some } \ i .
\end{equation}

Indeed if we assume the converse of the condition on the right hand side, then
either $I_{t}(\cA) \ni 1$ or one of the maximal minors vanishes, i.e.
$\Proj(R/I_{t}(\cA)) \notin W_s(\underline{b};\underline{a})$. Conversely
assuming the right hand side condition (to simplify notations, assume $a_{i-1}
\ge b_i$ for $1 \le i \le s$ and $a_{i-1} > b_i$ for $s < i \le t$ for some
integer $s < t$), then we may
take $\cA =$ $\left(\begin{smallmatrix}
    I & O \\
    O & \cA'
  \end{smallmatrix} \right)$ where $I$ is the $s \times s$ identity matrix,
$O$ are matrices of zero's and $\cA'$, for $t-s > 1$, the $(t-s) \times
(t-s+c-1)$ matrix used in \cite{KM}, Corollary 2.6 to define a good
determinantal scheme (if $t-s=1$ we take the
entries of $\cA'$ to be a regular sequence). We get $\Proj(R/I_{t}(\cA)) \in
W(\underline{b};\underline{a})$ and we easily deduce \eqref{WWs}. Note that by
\cite{KM}, end of p.\! 2877 and \cite{KM}, Remark 3.7 we still have that the closures  of $W(\underline{b};\underline{a})$ and $
W_s(\underline{b};\underline{a})$ in $ \Hi ^p
(\PP^n)$ 
are equal and irreducible.

\vskip 2mm Let $A=R/I_X$ be the homogeneous coordinate ring of a standard
determinantal scheme. By \cite{b-v}, Theorem 2.20 and \cite{eise},
Corollaries A2.12 and A2.13 the {\em Eagon-Northcott complex} yields a
minimal free resolution of $A$
\begin{equation}\label{EN}0 \ra \wedge^{t+c-1}G^* \otimes S_{c-1}(F)\otimes
  \wedge^tF\ra \wedge^{t+c-2} G ^*\otimes S _{c-2}(F)\otimes \wedge ^tF\ra
  \ldots  \end{equation}
$$ \ra
\wedge^{t}G^* \otimes S_{0}(F)\otimes \wedge^tF\ra R \ra A \ra 0
$$ which allows us to deduce that
any standard determinantal scheme is arithmetically Cohen-Macaulay (ACM).
Moreover if $M_{\cA }:= \coker (\varphi^*)$ then $K_{A}(n+1)\cong
S_{c-1}M_{\cA}(\ell_c)$ where
\begin{equation}\label{ell}
 \ell_i :=\sum_{j=0}^{t+i-2}a_j-\sum_{k=1}^tb_k \ \ {\rm for} \ \ 2 \le i \le c.
\end{equation}

Let ${\cB}$ be the matrix obtained by deleting the last column of ${\cA}$, let
$B = R/I_{B}$ be the $k$-algebra given by the maximal minors of ${\cB}$ and
let $M_{\cB}$ be the cokernel of $\phi^*=\Hom_R(\phi,R)$ where $ \phi:F=\oplus
_{i=1}^tR(b_i)\rightarrow G':=\oplus _{j=0}^{t+c-3}R(a_j)$ is the graded
morphism induced by ${\cB}^t$. Recall that if $c > 2$ there is an exact
sequence
\begin{equation}\label{Mi}
0\longrightarrow B \longrightarrow M_{\cB}(a_{t+c-2})
\longrightarrow M_{\cA}(a_{t+c-2}) \longrightarrow 0
\end{equation}
in which $B \longrightarrow M_{\cB}(a_{t+c-2})$ is a regular section given by
the last column of ${\cA}$. Moreover,
\begin{equation}\label{Di}
0\longrightarrow
M_{\cB}(a_{t+c-2})^* :=\Hom_{B}(M_{\cB}(a_{t+c-2}),B)\longrightarrow B
\longrightarrow A \longrightarrow 0
\end{equation}
is exact by \cite{KMNP} or \cite{KMMNP} (e.g.\! see the text after (3.1) of
\cite{KMMNP}). Note that the proofs of \eqref{Mi}-\eqref{Di} rely
heavily on the equality $\Ann(M_{\cB})=I_B$ established in \cite{BE}. If $c=2$
($\codim_RB=1$) we have at least $\Ann(M_{\cB})I_{t-1}(\cB) \subset I_B
\subset \Ann(M_{\cB})$ by \cite{BE}; thus the kernel, $I_{A/B}$, of $B \to A$
satisfies $I_{A/B}=M_{\cB}(a_{t+c-2})^*$ (resp. $\tilde I_{A/B} \arrowvert_U =
\tilde M_{\cB}(a_{t+c-2})^* \arrowvert_U$ where $U=Y-V(I_{t-1}(\cB))$) for
$c>2$ (resp. $c=2$). Due to the Buchsbaum-Rim resolution of $M:=M_{\cA}$, $M$
is a maximal Cohen-Macaulay $A$-module, and so is $I_{A/B}$ for $c>2$ by
(\ref{Di}).

By successively deleting columns from the right hand side of $\cA$, and
taking maximal minors, one gets a flag of standard determinantal subschemes
\begin{equation}\label{flag}
  ({\mathbf X.}) :  X = X_c \subset X_{c-1} \subset ...  \subset X_{2} \subset
  X_{1} \subset  \PP^{n}
\end{equation}
where each $X_{i+1} \subset X_i$ (with ideal sheaf ${ \mathcal
  I_{X_{i+1}/X_i}} = {\mathcal I_i}$) is of codimension 1, $X_{i} \subset
\PP^{n}$ is of codimension $i$ ($i=1,\dots , c$) and there exist ${\mathcal
  O}_{X_i}$-modules ${\mathcal M_i}$ fitting into short exact
sequences $$0\rightarrow {\mathcal O}_{X_i}(-a_{t+i-1})\rightarrow {\mathcal
  M}_i \rightarrow {\mathcal M}_{i+1} \rightarrow 0 \ \text{ for } \ 2 \leq i
\leq c-1,$$ such that ${\mathcal I}_i (a_{t+i-1})$ is the ${\mathcal
  O}_{X_i}$-dual of ${\mathcal M}_i$ for $2 \leq i \leq c$ (all this holds
also for $i=1$ provided we restrict the sheaves to $X_1-V(I_{t-1}(\varphi_1))$
where $\varphi_1$ is given by $X_1=V(I_{t}(\varphi_1))$). In this context,
we let $D_i:= R/I_{X_i}$, $I_{D_i} := I_{X_i}$ and $I_i:=I_{D_{i+1}}/I_{D_i}$.

\begin{remark} \label{dep} Assume $t \ge 2$, $b_1\le ... \le b_t$, $a_0\le a_1
  \le ... \le a_{t+c-2}$ and let $\alpha \ge 1$ be an integer. If $X$ is
  general in
  $W(\underline{b};\underline{a})$ and $a_{i-\min
    (\alpha ,t)}-b_i\ge 0$ for $\min (\alpha ,t)\le i\le t$, then
\begin{equation}\label{a-b} \codim_{X_j}Sing(X_j)\ge \min\{2\alpha
  -1, j+2\}  \text{ for all } j=2, \cdots , c. \end{equation} This follows 
  from the theorem of \cite{chang}, arguing as in \cite{chang}, Example 2.1. In
particular, if $\alpha \ge 3$, we get for each $i>0$ that
$X_i\hookrightarrow  \PP^{n}$ and $X_{i+1}\hookrightarrow X_{i}$
are local complete intersections (l.c.i.'s) outside some set $Z_i$
of codimension at least $ \min(4,i+1)$ in $X_{i+1}$, cf. the paragraph below.
Finally note that it is easy to show \eqref{a-b} for $(j,\alpha)=(1,2)$
provided  $a_{i-2} > b_i$ for $2 \le i\le t$, cf. \cite{B}, (1.10).
\end{remark}

\vskip 2mm Now let $Z \subset X$ (resp. $Z_i\subset X_i$) be some closed
subset such that $U:=X-Z\hookrightarrow \PP^{n}$ (resp.
$U_{i}:=X_{i}-Z_{i}\hookrightarrow \PP^{n}$) is an l.c.i. By the fact that the
$1^{st}$ Fitting ideal of $M$ is equal to $I_{t-1}(\varphi)$, we get that
$\tilde{M}$ is locally free of rank 1 precisely on $X-V(I_{t-1}(\varphi))$
\cite{BH}, Lemma 1.4.8. Since the set of non-l.c.i points of $X\hookrightarrow
\PP^{n}$ is precisely $V(I_{t-1}(\varphi))$ by e.g. \cite{ulr}, Lemma 1.8, we
get that $U\subset X-V(I_{t-1}(\varphi))$ and that $\tilde{M}$ is locally free
on $U$. Indeed ${\mathcal M}_i$ and $\cI _{X_{i}}/\cI ^2_{X_{i}}$ are locally
free on $U_i$, as well as on $U_{i-1} \cap X_i$. Note also that since
$V(I_{t-1}(\cB)) \subset V(I_{t}(\cA))$, we may suppose $Z_{i-1} \subset X_i$.

Let us  recall the following useful comparison of cohomology
groups. If $L$ and $N$ are finitely generated $A$-modules such
that $\depth_{I(Z)}L\ge r+1$ and $\tilde{N}$ is locally free on
$U:=X-Z$, then the natural map
\begin{equation} \label{NM}
\Ext^{i}_A(N,L)\longrightarrow
H_{*}^{i}(U,{\cH}om_{{\cO}_X}(\tilde{N},\tilde{L}))
\end{equation}
is an isomorphism, (resp. an injection) for $i<r$ (resp. $i=r$)
cf. \cite{SGA2}, exp. VI. Note that we interpret $I(Z)$ as $\goth m$ if $Z=
\emptyset$.

\vskip 2mm In \cite{KM}, Conjecture 6.1,  we conjectured the
dimension of (a non-empty) $ W(\underline{b};\underline{a})$ in
terms of the invariant {\small
\begin{equation} \label{lamda} \lambda_c:= \sum_{i,j}
    \binom{a_i-b_j+n}{n} + \sum_{i,j} \binom{b_j-a_i+n}{n} - \sum _{i,j}
    \binom{a_i-a_j+n}{n}- \sum _{i,j} \binom{b_i-b_j+n}{n} + 1.
\end{equation}}
Here the indices belonging to $a_j$ (resp. $b_i$) range over
$0\le j \le t+c-2$ (resp. $1\le i \le t$) and $\binom{a}{b} =0$
whenever $a<b$.

\begin{conjecture} \label{conj1}
  Given integers $a_0\le a_1\le ... \le a_{t+c-2}$ and $b_1\le ...\le b_t$, we
  set $\ell_i :=\sum_{j=0}^{t+i-2}a_j-\sum_{k=1}^tb_k$ and $h_{i-3}:=
  2a_{t+i-2}-\ell_i +n$, for $i=3,4,...,c$. 
  Assume $a_{i-\min ([c/2]+1,t)}\ge b_{i}$ for $\min ([c/2]+1,t)\le i \le t$.
  Then we have
 \[
 \dim W(\underline{b};\underline{a}) = \lambda_c+ K_3 + K_4+...+K_c \]
where $K_3=\binom{h_0}{n}$ and $K_4= \sum_{j=0}^{t+1} \binom{h_1+a_j}{n}-
\sum_{i=1}^{t} \binom{h_1+b_i}{n}$ and in general \[ K_{i+3}= \sum _{r+s=i
  \atop r , s \ge 0} \sum _{0\le i_1< ...< i_{r}\le t+i \atop 1\le
  j_1\le...\le j_s \le t } (-1)^{i-r} \binom{h_i+a_{i_1}+\cdots
  +a_{i_r}+b_{j_1}+\cdots +b_{j_s} }{n} \  {\rm for}  \ 0 \le i \le c-3. \]
\end{conjecture}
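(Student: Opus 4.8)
The plan is to prove the formula by induction on the codimension $c$, using the flag \eqref{flag} together with the Hilbert flag scheme that parametrizes the pairs $X_c\subset X_{c-1}$. The base case is $c=2$, where the sum $K_3+\cdots+K_c$ is empty and the equality $\dim W(\underline b;\underline a)=\lambda_2$ is already known from \cite{KMMNP} (it is the count of $t\times(t+1)$ matrices modulo row and column operations). For the inductive step I delete the column of $\cA$ of highest degree $a_{t+c-2}$, passing from $X=X_c$ to $X_{c-1}$, and aim to show that the resulting increment in dimension is exactly $(\lambda_c-\lambda_{c-1})+K_c$. Telescoping these increments down to the base case then gives $\dim W_c=\lambda_2+\sum_{k=3}^{c}\big((\lambda_k-\lambda_{k-1})+K_k\big)=\lambda_c+K_3+\cdots+K_c$, as asserted.

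To compute the increment I introduce the Hilbert flag scheme $D$ of pairs $(X_c\subset X_{c-1})$, with its two projections $p$ and $q$ onto the $X_{c-1}$- and $X_c$-factors of $\Hi^p(\PP^n)$, whose images are the closures of $W_{c-1}$ and $W_c$. Over a general $X_{c-1}$ the fibre of $p$ consists, by \eqref{Mi}, of the regular sections of $\tilde M_{\cB}(a_{t+c-2})$ that adjoin the extra column, so after projectivizing it has dimension $h^0(X_{c-1},\tilde M_{\cB}(a_{t+c-2}))-1$; over a general $X_c$ the fibre of $q$ measures the thickenings of $X_c$ to such an $X_{c-1}$ and is controlled, through \eqref{Di} and the identification $\cI_{X_c/X_{c-1}}=I_{A/B}=M_{\cB}(a_{t+c-2})^*$, by a suitable $\Hom$/$\Ext$-group over $B$. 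The relation $\dim W_c=\dim W_{c-1}+\dim(\text{fibre of }p)-\dim(\text{fibre of }q)$ then reduces everything to the evaluation of these two fibre dimensions.

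The core of the argument is to express the two fibre dimensions cohomologically and collapse them to Euler characteristics. Using the comparison map \eqref{NM} on the l.c.i.\ locus — legitimate once $M_{\cB}$, $M_{\cA}$ and $I_{A/B}$ have enough depth along the non-l.c.i.\ locus — I rewrite the fibre dimensions as degree-zero parts of $\Hom$- and $\Ext$-groups, and substitute the Eagon--Northcott resolution \eqref{EN} together with its Buchsbaum--Rim analogues for the symmetric powers of $M_{\cB}$. Under the hypothesis $a_{i-\min([c/2]+1,t)}\ge b_i$ the intermediate cohomology in the graded strand computing these groups vanishes, so the alternating sum coming from the resolution collapses in each degree to a single $\dim_k R_d=\binom{d+n}{n}$. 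Carrying out this bookkeeping produces exactly the binomial sum defining $K_c$: the distinct choices $0\le i_1<\cdots<i_r\le t+c-3$ come from the factors $\wedge^{r}G^*$ and the repeated choices $1\le j_1\le\cdots\le j_s\le t$ from the factors $S_s(F)$ in \eqref{EN}, while the shift $h_{c-3}=2a_{t+c-2}-\ell_c+n$ records the double twist by $a_{t+c-2}$ from the section and the dualization in \eqref{Di} together with the shift $\ell_c$ of \eqref{ell}; the leftover terms reassemble into $\lambda_c-\lambda_{c-1}$.

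The hard part will be the vanishing that makes the Euler-characteristic computation legitimate: proving that the higher $\Ext$-groups — equivalently the intermediate local cohomology of $M_{\cB}$, $M_{\cA}$ and $I_{A/B}$ in the relevant degrees — really do vanish under the single weakened assumption $a_{i-\min([c/2]+1,t)}\ge b_i$, rather than under the stronger numerical hypotheses of \cite{KM}. Here I would combine the depth estimate for mixed determinantal schemes of Theorem \ref{dim-mix-det} with the bound $\codim_{X_j}Sing(X_j)\ge\min\{2\alpha-1,\,j+2\}$ of Remark \ref{dep}, taking $\alpha=\min([c/2]+1,t)$ so that the hypothesis supplies precisely the codimension of the non-l.c.i.\ loci $Z_i$ needed to push \eqref{NM} far enough along the flag. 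Verifying this vanishing uniformly at every stage of the induction, and checking that the excluded loci $Z_i$ never shrink the isomorphism range, is the step I expect to resist a routine treatment, and is what currently separates the full conjecture from Theorem \ref{codcdim0} and the partial results of \cite{KM}.
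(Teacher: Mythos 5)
The statement you are attempting to prove is a \emph{conjecture}, not a theorem: the paper offers no proof of it and, in fact, refutes it as stated. As discussed in the paper's final section (following \cite{K09}), the family $W({0,0};{1,1,\dots,1})$ with $t=2$, $b_1=b_2=0$, $a_0=\cdots=a_c=1$ and $n=c\ge 3$ satisfies the numerical hypothesis of Conjecture~\ref{conj1} (it reduces to $a_0\ge b_2$, i.e.\ $1\ge 0$), yet the scheme cut out by the $2\times 2$ minors of a general $2\times (c+1)$ matrix of linear forms is a set of $c+1$ distinct points in $\PP^c$, so $\dim W({0,0};{1,\dots,1})\le c(c+1)$, which is strictly smaller than the conjectured value $\lambda_c+K_3+\cdots+K_c=c(c+1)+c-2$. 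Hence no proof along your lines (or any other) can exist. The paper only proves the formula under genuinely stronger hypotheses (Theorem~\ref{codcdim0}: $a_0>b_t$ together with $a_{t+c-2}>a_{t-2}$, resp.\ $a_{t+3}>a_{t-2}$ for $c\ge 6$), and it replaces Conjecture~\ref{conj1} by Conjecture~\ref{conjnew1}, which excludes precisely this family and requires strict inequality $a_{i-\min([c/2]+1,t)}>b_i$ when $n=c$.

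The concrete step of your outline that fails is the claimed Euler-characteristic collapse: ``under the hypothesis $a_{i-\min([c/2]+1,t)}\ge b_i$ the intermediate cohomology in the graded strand computing these groups vanishes.'' This is exactly what is false in the counterexample, where all the $a_j$ coincide. In the paper's language, your comparison of the fibre dimensions of the two flag-scheme projections amounts to the inequality \eqref{maineq}, namely $_0\!\hom_R(I_{Y},I_{X/Y})\le \sum_{j=0}^{t+c-3}\binom{a_j-a_{t+c-2}+n}{n}$, and this fails for the family above because too many distinct determinantal schemes $Y$ of codimension $c-1$ contain the same $X$, so the fibre of the second projection jumps. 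The paper's proof of the corresponding bound (Proposition~\ref{mainnewprop}) shows exactly where a degree condition must enter: one needs $I_{c-1}$ to have at most one minimal generator in degree $\le\ell_1$, which is what forces the hypothesis $a_{t+c-2}>a_{t-2}$; under your weaker assumption several minimal generators can occur in that degree and the bound breaks down. Note finally that your last paragraph concedes that this vanishing is unproved, so even on its own terms the proposal is a programme rather than a proof --- and, by the counterexample, it is a programme that cannot be completed as stated.
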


In \cite{KM} we proved that the right hand side in the formula for
$\dim W(\underline{b};\underline{a})$ in Conjecture~\ref{conj1} is
always an upper bound for $\dim W(\underline{b};\underline{a})$
(\cite{KM}, Theorem 3.5) and, moreover,  that Conjecture~\ref{conj1} holds
in the range
\begin{equation} \label{2c5}
2 \le c \le 5 \  \
  {\rm and} \ \ n-c > 0 \ {\rm (\, supposing \ } char (k) = 0 {\rm \ if} \ c =
    5\, )\, ,
  \end{equation}
  cf. \cite{KM}, Theorems 3.5 and 4.5; and Corollaries 4.7, 4.10 and 4.14.
  See also \cite{elli}, \cite{KMMNP} for the cases $2 \le c \le 3$. In
  \cite{K09}, however, the first author gave a counterexample to
  Conjecture~\ref{conj1} for zero-dimensional schemes (see the section of
  conjectures in this paper).

  \begin{remark} \label{dim0new} Conjecture~\ref{conj1} holds in codimension
    $c\ge 6$ provided the numerical conditions of \cite{KM}, Corollary 4.15
    are satisfied. By \cite{KM}, Remarks 4.16, 4.17 and Corollary 4.18
    Conjecture~\ref{conj1} holds (without assuming $n > c$ or $ char (k) = 0$)
    for any $ W(\underline{b};\underline{a})$ satisfying
$$
a_{t+3}>a_{t-1}+a_t+a_{t+1}-a_0-a_1 \qquad \qquad \ \text{ in codimension} \ \
c=5 \ , $$
$$  a_{t+2}>a_{t-1}+a_t-a_0  \qquad \qquad  \qquad  \qquad  \qquad \qquad \
\text{ for }  \ \ n = c=4 \ , $$
 $$  a_{t+1}>a_{t-1} \  \text{ and} \  \ a_{i-2} > b_i \
 \text{ for} \ 2 \le i\le t  \qquad \qquad \text{for} \quad n= c=3 \ . $$ Note
 that in the case $n = c = 3 $ we need $a_{i-2} > b_i$ (not $a_{i-2} \ge b_i$
 as assumed in \cite{KM}) for $2 \le i\le t$ for the proof of \cite{KM},
 Corollary\! 4.18 to hold, see the last sentence of Remark~\ref{dep}.
 \end{remark}

 \vskip 2mm A goal of this paper is to extend the results summarized in
 \eqref{2c5} and in the above remark. To do this we will need the following
 result where $\underline{a} = a_0, a_1,..., a_{t+c-2}$ and $\underline{a'} =
 a_0, a_1,..., a_{t+c-3}$.

\begin{proposition}\label{main1} \ Let $c\ge 3$, let $(X) \in
  W(\underline{b};\underline{a})$ and suppose \ $ \dim
  W(\underline{b};\underline{a'}) \ge \lambda_{c-1}+ K_3 + K_4+...+K_{c-1} $
  and $\depth_{I(Z)}B\ge 2$ for a general $Y=\Proj(B)\in
  W(\underline{b};\underline{a'})$. If
\begin{equation}\label{maineq}
  _0\!
  \hom_R(I_{Y},I_{X/Y}) \le  \sum _{j=0}^{t+c-3} \binom{a_j-a_{t+c-2}+n}{n} \ ,
\end{equation}
then \ \ $\dim W(\underline{b};\underline{a})= \lambda_c+ K_3 +
K_4+...+K_c$ and  \eqref{maineq} turns out to be an equality.
\end{proposition}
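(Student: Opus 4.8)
The plan is to compute $\dim W(\underline{b};\underline{a})$ by relating it to the already-controlled $\dim W(\underline{b};\underline{a'})$ via the flag structure \eqref{flag}, exploiting that deleting the last column of $\cA$ passes from $X=X_c$ to $Y=X_{c-1}=\Proj(B)$. Since every scheme in $W(\underline{b};\underline{a})$ is ACM (hence unobstructed in a suitable sense through the theory of Hilbert flag schemes), I would first realize a general $X$ as a codimension-$1$ good determinantal subscheme of a general $Y\in W(\underline{b};\underline{a'})$, using the exact sequence \eqref{Di} which identifies $I_{X/Y}$ with $M_{\cB}(a_{t+c-2})^{*}$ and the regular section \eqref{Mi} given by the last column. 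The idea is to count the dimension of the family $W(\underline{b};\underline{a})$ as the dimension of the base family $W(\underline{b};\underline{a'})$, plus the dimension of the fiber consisting of choices of the last column of $\cA$ (equivalently, of sections cutting $X$ out of $Y$) modulo the automorphisms and trivial deformations that do not change $X$.

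Second, I would make this fiber count precise. The last column is an element of $\Hom_R(R(-a_{t+c-2}),G')=\bigoplus_{j=0}^{t+c-3}R(a_j-a_{t+c-2})$, whose dimension is exactly $\sum_{j=0}^{t+c-3}\binom{a_j-a_{t+c-2}+n}{n}$—the right-hand side of \eqref{maineq}. The deformations of $X$ inside $Y$ (i.e.\ the tangent space to the family of such codimension-$1$ good determinantal $X\subset Y$) should be governed by $_0\!\Hom_R(I_{Y},I_{X/Y})$, since deforming $X$ while keeping $Y$ fixed corresponds to deforming the quotient map $B\to A$, i.e.\ to degree-zero homomorphisms $I_Y\to I_{X/Y}$. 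Thus the hypothesis \eqref{maineq} asserts that the dimension of this deformation space is at most the number of available sections. Combining the upper bound from \cite{KM}, Theorem 3.5 (which gives $\dim W(\underline{b};\underline{a})\le \lambda_c+K_3+\cdots+K_c$) with the lower bound obtained from the base plus fiber count, one forces equality throughout, and in particular forces \eqref{maineq} to be an equality.

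Concretely, I would set up the counting using the Hilbert flag scheme of the pair $Y\supset X$, projecting to the factor parametrizing $Y$. The depth hypothesis $\depth_{I(Z)}B\ge 2$ is what makes the comparison map \eqref{NM} an injection (for $i=1$), letting me control $_0\!\Hom_R(I_Y,I_{X/Y})$ by local/sheafy data on $U=X-Z$ and transfer the count of sections into a clean estimate; it also guarantees that the generic section of $M_{\cB}(a_{t+c-2})$ over $Y$ is regular, so that a general choice of last column really produces a good determinantal $X$ in $W(\underline{b};\underline{a})$. Using $\dim W(\underline{b};\underline{a'})\ge \lambda_{c-1}+K_3+\cdots+K_{c-1}$ together with the identity $\lambda_c-\lambda_{c-1}+K_c=\sum_{j=0}^{t+c-3}\binom{a_j-a_{t+c-2}+n}{n}$ (the numerical heart, verified by expanding the binomial invariant $\lambda_c$ in \eqref{lamda} and the formula for $K_c$), the fiber count of sections matches precisely the jump $\lambda_c+K_c-\lambda_{c-1}$ in the expected dimension.

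The main obstacle I anticipate is the careful bookkeeping of trivial deformations: I must show that the map from (base family $\times$ sections) to $W(\underline{b};\underline{a})$ is dominant with fibers of the expected dimension, which requires identifying exactly which degree-zero homomorphisms and which section-rescalings yield the same subscheme $X$. Equivalently, the delicate point is to verify that $_0\!\Hom_R(I_Y,I_{X/Y})$ genuinely computes the tangent contribution of moving $X$ within $Y$, and then to invoke \eqref{maineq} to collapse any slack. Once the numerical identity for $\lambda_c+K_c-\lambda_{c-1}$ is in hand and the inequality \eqref{maineq} is fed in, the semicontinuity/upper-bound from \cite{KM}, Theorem 3.5 pins down the dimension exactly and shows \eqref{maineq} is forced to be an equality.
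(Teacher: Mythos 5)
The paper itself gives no argument here: its ``proof'' is the citation \cite{K09}, Proposition 3.4, and the argument there is indeed an incidence/flag-scheme dimension count of the general shape you describe (base $W(\underline{b};\underline{a'})$, plus a fiber of added columns, minus an overcount, compared against the upper bound of \cite{KM}, Theorem 3.5). So your skeleton is the right one, but the three concrete identifications on which your count rests are wrong, and they are exactly where the proof lives or dies. First, the space of last columns is $\Hom_R(F,R(a_{t+c-2}))_0\cong \bigoplus_{i=1}^{t}R_{a_{t+c-2}-b_i}$, of dimension $\sum_{i=1}^{t}\binom{a_{t+c-2}-b_i+n}{n}$ (it involves the $b_i$'s, not the $a_j$'s); the fiber of $(X,Y)\mapsto Y$ over a general $Y$ is an open subset of $\PP\bigl((M_{\cB})_{a_{t+c-2}}\bigr)$ (columns modulo column operations and modulo ${}_0\!\Hom_B(M_{\cB},M_{\cB})=k$, which is where $\depth_{I(Z)}B\ge 2$ enters), and by exactness of the Eagon--Northcott/Buchsbaum--Rim complex its dimension is $\sum_{i}\binom{a_{t+c-2}-b_i+n}{n}-\sum_{j=0}^{t+c-3}\binom{a_{t+c-2}-a_j+n}{n}+K_c-1$; this is precisely how $K_c$ enters the bookkeeping, a point your plan never accounts for. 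The right-hand side of \eqref{maineq}, namely $\sum_{j=0}^{t+c-3}\binom{a_j-a_{t+c-2}+n}{n}=\#\{j\le t+c-3 : a_j=a_{t+c-2}\}$, is a much smaller number and is something else entirely: the expected dimension of the family of $Y$'s containing a \emph{fixed} $X$, coming from the trivial ways of altering which rank-one summand of $G$ is deleted.

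Second, you have the deformation-theoretic roles backwards: ${}_0\!\Hom_R(I_Y,I_{X/Y})$ is the tangent space to deformations of $Y$ that continue to contain the fixed $X$ (a map $I_Y\rightarrow B$ deforms $Y$ inside $X$ iff it lands in $I_X/I_Y$), i.e.\ to the fibers of the projection $(X,Y)\mapsto X$, whereas deformations of $X$ inside a fixed $Y$ are governed by ${}_0\!\Hom_B(I_{X/Y},A)$. Thus \eqref{maineq} caps the overcount that must be \emph{subtracted}, and this is also why equality in \eqref{maineq} is forced at the end. Third, your ``numerical heart'' $\lambda_c-\lambda_{c-1}+K_c=\sum_{j=0}^{t+c-3}\binom{a_j-a_{t+c-2}+n}{n}$ is false: take $t=2$, $c=3$, $b_1=b_2=0$, $(a_0,a_1,a_2,a_3)=(1,1,1,2)$; then $K_3=\binom{n-1}{n}=0$ and $\lambda_3-\lambda_2+K_3=n^2-2$, while the right-hand side is $0$. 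The correct identity (say when $b_t<a_{t+c-2}$; otherwise an extra term $\sum_i\binom{b_i-a_{t+c-2}+n}{n}$ appears) is $\lambda_c-\lambda_{c-1}+K_c=\dim (M_{\cB})_{a_{t+c-2}}-1-\sum_{j=0}^{t+c-3}\binom{a_j-a_{t+c-2}+n}{n}$, and it is this identity, together with the capped overcount and the upper bound of \cite{KM}, Theorem 3.5, that closes the sandwich. As written, your base-plus-fiber computation produces a number unrelated to $\lambda_c+K_3+\cdots+K_c$, so the proposal does not prove the proposition without redoing all three identifications.
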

\begin{proof} See \cite{K09}, Proposition 3.4.
\end{proof}

Let us recall how we proved Remark~\ref{dim0new} since we want to
generalize that approach. Letting $ a=a_{t+i-2}-a_{t+i-1}$ we
showed in \cite{KM} that $ \Hom_{D_{i-1}}(I_{i-1}, I_{i}) \cong
D_i(a)$ for $i < c$ using $\depth_{I(Z_{i-1})}D_{i}\ge 2$.
Indeed this follows from \eqref{NM}, i.e. from
\begin{equation*} H^0_{*}(U_{i-1},\cH om(\cI_{i-1},\cI_i))\cong
  H^0_{*}(U_{i-1},\cH om_{\cO _{X_{i}}}(\cI _{i-1}\otimes_{\cO _{X_{i-1}}}\cO_
  {X_i}\otimes \cI ^* _i, \cO _{X_{i}}))\end{equation*} because $
\tilde{M}_i|_{U_{i-1}} \cong \cI ^*_i(-a_{t+i-1})|_{U_{i-1}}$ is locally free,
$\tilde{M}_{i} |_{U_{i-1}} \cong \tilde{M}_{i-1} \otimes \cO_ {X_i}
|_{U_{i-1}}$ and hence $\cI _{i-1}\otimes_{\cO _{X_{i-1}}}\cO_{X_{i}}
|_{U_{i-1}}\cong \cI_{i}(-a)|_{U_{i-1}}$, and note that
$U_{i-1} \cap X_i \subset U_{i}$. Then since
\begin{equation}\label{1}
  0\rightarrow \Hom _R(I_{i-1}, I_{i}) \rightarrow \Hom
  _R(I_{D_{i}}, I_{i}) \rightarrow \Hom _R(I_{D_{i-1}},
  I_{i})\end{equation} is exact  we were able to show \eqref{maineq} for
$X=X_{c} \subset Y=X_{c-1}$ by putting $\ _0\! \Hom _R(I_{D_{c-2}}, I_{c-1})=0$
(through making the minimal generators of $I_{D_{c-2}}$ and  $I_{c-1}$
explicit via \eqref{EN}).
Using Proposition~\ref{main1} we obtained Remark~\ref{dim0new}
if
the conjecture holds for $
W(\underline{b};\underline{a'}) \ni Y:=X_{c-1}$.

\vskip 2mm Let us finish this section by gathering all the results on the depth
of mixed determinantal ideals and cogenerated ideals needed in the next section.
We start by fixing some notation. Let
\begin{equation}\label{defmatriz} \cA= \left ( \begin{array}{ccccccc} f_1^1 & f_2^1 & \cdots & f_q^1 \\ f_1^2 & f_2^2 & \cdots & f_q^2 \\ \vdots & \vdots & \vdots & \vdots \\ f_1^p & f_2^p & \cdots & f_q^p \\
    \end{array} \right )  \end{equation}
be a homogeneous $p\times q$ matrix with entries homogeneous polynomials $f_i^j\in k[x_0, \cdots ,x_n]$ of degree $a_j-b_i$. For any choice $(\underline{\alpha} ; \underline{\beta} )=( \alpha _1, \cdots , \alpha _m ;  \beta _1, \cdots , \beta _m)$ of row indexes $1\le \alpha _1< \cdots < \alpha _m\le p$ and of column indexes
$1\le  \beta _1< \cdots < \beta _m\le q$,  we denote 
by $I_{(\underline{\alpha };\underline{ \beta} )}(\cA )$
the {\em ideal cogenerated by } $(\underline{\alpha} ;\underline{ \beta} )$,
i.e. $I_{(\underline{\alpha} ,\underline{ \beta} )}$ is the homogeneous ideal
of $k[x_0, \cdots ,x_n]$ generated by all $(m+1)\times (m+1)$ minors of $\cA$,
all $i \times i$ minors of the rows $1, \cdots , \alpha _i -1$ for $i=1, \cdots ,
m$ and all $i \times i$ minors of the columns $1, \cdots, \beta _i -1$ for $i=1,
\cdots , m$.

\begin{example} (1) If $(\underline{\alpha };\underline{ \beta} )=(1, \cdots ,
  t-1;1,\cdots , t-1)$, then $I_{(\underline{\alpha }; \underline{\beta
    })}(\cA )$ is the ideal generated by the $t \times t$ minors of $\cA$,
  i.e., $I_{(\underline{\alpha} ; \underline{ \beta })}(\cA )=I_t(\cA)$.

  (2) Let $\cA$ be a homogeneous $p\times q$ matrix and let $\cA_i$ be the
  matrix obtained by deleting the last $i-1$ columns of $\cA$. We assume
  $p\le q$ and we fix an integer $m<p$ and $(\underline{\alpha} ;\underline{
    \beta} )=(1, \cdots , m; \beta _1,\cdots , \beta _m)$. Set $j:=\min\{i \mid
  \beta _i>i \}$, $c_1=1$ and $c_s= q+2-\beta _{m+2-s}$ for $2\le s\le m+2-j$.
  The ideal cogenerated by $(\underline{\alpha} ;\underline{ \beta} )=(1,
  \cdots , m; \beta _1,\cdots , \beta _m)$ can be identified with the
  following mixed determinantal ideal
$$I_{(\underline{\alpha};\underline{\beta})}(\cA)=I_{m+1}(\cA _{c_1})+
I_{m}(\cA _{c_2})+...+I_{j}(\cA _{c_{m+2-j}}),
$$where $I_{\lambda}(\cA _{\varrho})$ denotes the ideal
generated by all $\lambda \times \lambda$ minors of $\cA _{\varrho }$.
\end{example}

\begin{theorem}\label{mixedindeterminates} Let $\cA =(x_{i,j})$ be a $p\times q$ matrix of indeterminates and let  $(\underline{\alpha };\underline{ \beta} )=( \alpha _1, \cdots , \alpha _m ;  \beta _1, \cdots , \beta _m)$ be a choice of $m$ row indexes  and of $m$ column indexes. Let $I_{(\underline{\alpha };\underline{ \beta } )}$ be the ideal cogenerated by $(\underline{\alpha };\underline{ \beta } )$. Then, $I_{(\underline{\alpha };\underline{ \beta} )}(\cA )$  is a Cohen-Macaulay ideal and
\begin{equation}\label{ht} ht (I_{(\underline{\alpha };\underline{ \beta })}(\cA))= pq-(p+q+1)m+\sum _{i=1}^m (\alpha _i+\beta _i).\end{equation}
\end{theorem}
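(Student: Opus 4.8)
The plan is to realize $I_{(\underline\alpha;\underline\beta)}(\cA)$ as the ideal generated by a poset ideal inside the algebra with straightening law (ASL) carried by $R=k[x_{i,j}]$, and then to read off both assertions from the combinatorics of the underlying distributive lattice. Recall from \cite{b-v} that $R$ is an ASL on the distributive lattice $\Delta$ of all minors of $\cA$, ordered by $[a_1,\dots,a_s \s b_1,\dots,b_s] \le [c_1,\dots,c_r \s d_1,\dots,d_r]$ if and only if $s\ge r$ and $a_i\le c_i,\ b_i\le d_i$ for $1\le i\le r$. Writing $\delta=[\alpha_1,\dots,\alpha_m \s \beta_1,\dots,\beta_m]$, the first step is to check that the explicit generators listed in the definition of $I_{(\underline\alpha;\underline\beta)}(\cA)$ generate exactly the ideal $(\Omega)$, where $\Omega=\{\mu\in\Delta : \mu\not\ge\delta\}$. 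One inclusion is immediate: an $(m+1)\times(m+1)$ minor has size $>m$, and an $i\times i$ minor confined to rows $1,\dots,\alpha_i-1$ (resp. columns $1,\dots,\beta_i-1$) has its $i$-th row index $\le\alpha_i-1$ (resp. column index $\le\beta_i-1$), so the condition $\delta\le\mu$ fails at $j=i$ and none of these is $\ge\delta$. For the reverse inclusion I would expand a given $\mu\in\Omega$ by Laplace along the first $i$ rows (or columns) at an index where the defining inequality fails, writing $\mu$ as an $R$-combination of the listed generators. Finally $\Omega$ is downward closed, since $\nu\le\mu$ and $\mu\not\ge\delta$ force $\nu\not\ge\delta$; hence $\Omega$ is a poset ideal and $\Psi:=\Delta\setminus\Omega=\{\mu:\mu\ge\delta\}$ is the principal order filter of $\delta$, which is a distributive sublattice (indeed the interval $[\delta,[\,p\s q\,]]$, since $[\,p\s q\,]$ is the maximum of $\Delta$).

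With this in place, the standard theory of ASLs (see \cite{b-v}) gives that $A:=R/I_{(\underline\alpha;\underline\beta)}(\cA)=R/(\Omega)$ is again an ASL, now on $\Psi$. For Cohen--Macaulayness I would invoke the deformation of a graded ASL to its discrete counterpart: $A$ degenerates flatly to the Stanley--Reisner ring $k[\Delta(\Psi)]$ of the order complex of $\Psi$, with Hilbert function preserved. Since $\Psi$ is a distributive lattice its order complex is shellable, so $k[\Delta(\Psi)]$ is Cohen--Macaulay; flatness of the degeneration then propagates Cohen--Macaulayness back to $A$, proving the first assertion.

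For the height I would compute $\dim A$ through the same degeneration: $\dim A=\dim k[\Delta(\Psi)]$ equals the maximal number of elements in a chain of $\Psi$. Now $\Delta$ is a graded distributive lattice whose rank function is
\[
r([a_1,\dots,a_s \s b_1,\dots,b_s])=\sum_{i=1}^s(a_i-i)+\sum_{i=1}^s(b_i-i)+(p-s)(q-s),
\]
so that its maximum $[\,p\s q\,]$ has rank $pq-1$. Since $\Psi$ is the interval $[\delta,[\,p\s q\,]]$, a longest chain in $\Psi$ has $r([\,p\s q\,])-r(\delta)+1=pq-r(\delta)$ elements, whence $\operatorname{ht} I_{(\underline\alpha;\underline\beta)}(\cA)=\dim R-\dim A=pq-(pq-r(\delta))=r(\delta)$. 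Substituting $r(\delta)=\sum_{i=1}^m(\alpha_i+\beta_i)-m(m+1)+(p-m)(q-m)$ and simplifying yields exactly $pq-(p+q+1)m+\sum_{i=1}^m(\alpha_i+\beta_i)$, which in the case $I_t(\cA)$ (where $(\underline\alpha;\underline\beta)=(1,\dots,t-1;1,\dots,t-1)$) recovers the classical value $(p-t+1)(q-t+1)$.

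The routine parts are the Laplace identification of the generating set and the arithmetic simplification of $r(\delta)$. The two steps I expect to demand the most care are (i) confirming the displayed rank function, i.e. checking that $r$ increases by exactly $1$ across each covering relation of $\Delta$ (the point where the precise description of covers, increasing a single index versus lowering the size by one, has to be controlled), and (ii) the clean transfer of Cohen--Macaulayness through the ASL-to-discrete deformation. For both I would lean on the treatment of straightening laws, cogenerated ideals, and wonderful posets in \cite{b-v} rather than reprove the deformation machinery from scratch.
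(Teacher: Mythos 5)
Your proposal is correct, and it is essentially the paper's own approach: the paper's entire proof of this theorem is the citation ``See \cite{b-v}, Corollary 5.12,'' and what you have written is a faithful reconstruction of the ASL machinery behind that citation --- identifying $I_{(\underline{\alpha};\underline{\beta})}(\cA)$ with the ideal generated by the poset ideal $\Omega=\{\mu\not\ge\delta\}$ in the distributive lattice of minors, transferring Cohen--Macaulayness from the discrete (Stanley--Reisner) degeneration via shellability of distributive lattices, and computing the height from the rank function, which indeed simplifies to $pq-(p+q+1)m+\sum_{i=1}^m(\alpha_i+\beta_i)$. The steps you defer to \cite{b-v} (the Laplace-expansion identification of the generators, the verification of the rank function on covers, and the Cohen--Macaulay property of graded ASLs on wonderful/distributive posets) are exactly the points that reference carries out, so nothing further is needed.
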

\begin{proof} See \cite{b-v}, Corollary 5.12.
\end{proof}

We are now ready to state the main result of this section and to prove that
the above formula for the height of a cogenerated ideal associated to a matrix
with entries that are indeterminates also works for a general homogeneous
matrix with entries thar are homogeneous polynomials of positive degree.
Indeed, we have

\begin{theorem} \label{dim-mix-det} Fix integers $b_1, \cdots , b_q$ and $a_1,
  \cdots ,a_p$. Let $\cA =(f_i^{j})_{i=1, \cdots, q}^{j=1,\cdots , p}$ be a
  general homogeneous $p\times q$ matrix $\cA $ with entries that are
  homogeneous forms of degree $a_j-b_i$, and assume that $a_j>b_i$ for all
  $j,i$. Let $(\underline{\alpha };\underline{ \beta })=( \alpha _1, \cdots ,
  \alpha _m ; \beta _1, \cdots , \beta _m)$ be any choice of $m$ row indexes
  and of $m$ column indexes, and suppose $n+1\ge pq-(p+q+1)m+\sum _{i=1}^m
  (\alpha _i+\beta _i)$. Then $I_{(\underline{\alpha };\underline{ \beta
    })}(\cA )$ is a Cohen-Macaulay ideal and $$ht (I_{(\underline{\alpha
    };\underline{ \beta })}(\cA))= pq-(p+q+1)m+\sum _{i=1}^m (\alpha _i+\beta
  _i).$$
\end{theorem}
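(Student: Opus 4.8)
The plan is to realize $I_{(\underline{\alpha};\underline{\beta})}(\cA)$ as a specialization of the \emph{generic} cogenerated ideal and then apply generic perfection. Set $e:=pq-(p+q+1)m+\sum_{i=1}^m(\alpha_i+\beta_i)$ and let $\mathcal{X}=(z_{ij})$ be the $p\times q$ matrix of indeterminates over $T:=k[z_{ij}]$. By Theorem~\ref{mixedindeterminates} the ideal $\fc:=I_{(\underline{\alpha};\underline{\beta})}(\mathcal{X})\subset T$ is Cohen--Macaulay of height $e$; since $T$ is regular, $\fc$ is perfect of grade $e$ and admits a finite free $T$-resolution $\mathbb{F}_\bullet$ of length $e$. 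Writing $\psi\colon T\to R$, $z_{ij}\mapsto f_i^j$, for the substitution homomorphism, the minors of $\cA$ are the $\psi$-images of the corresponding minors of $\mathcal{X}$, so $I_{(\underline{\alpha};\underline{\beta})}(\cA)=\fc R$. I would then (i) show $\operatorname{grade}(\fc R)\ge e$ for a general choice of $\cA$ by a parameter count, and (ii) conclude Cohen--Macaulayness and the exact height by generic perfection.

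For (i) I would introduce the affine space $\mathbb{A}^N$, $N=\sum_{i,j}\binom{a_j-b_i+n}{n}$, of coefficient vectors of the entries $f_i^j$ (each a form of degree $a_j-b_i\ge1$), so that a point $\underline{u}\in\mathbb{A}^N$ is precisely a matrix $\cA_{\underline{u}}$ as in the statement, and form the incidence variety
\[
\mathcal{Z}:=\{(\underline{u},[x])\in\mathbb{A}^N\times\PP^n \mid [x]\in V(I_{(\underline{\alpha};\underline{\beta})}(\cA_{\underline{u}}))\},
\]
with projections $\pi_1\colon\mathcal{Z}\to\mathbb{A}^N$ and $\pi_2\colon\mathcal{Z}\to\PP^n$. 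First I would compute $\dim\mathcal{Z}$ through $\pi_2$. For fixed $[x]\in\PP^n$, a minor of $\cA_{\underline{u}}$ evaluated at $x$ is the corresponding minor of the scalar matrix $(f_i^j(x))$, so $\pi_2^{-1}([x])$ is the preimage of the affine cogenerated variety $V(\fc)\subset\mathbb{A}^{pq}=\Spec T$ under the evaluation map $\operatorname{ev}_x\colon\mathbb{A}^N\to\mathbb{A}^{pq}$, $\underline{u}\mapsto(f_i^j(x))$. Since $x\neq0$ and every $a_j-b_i\ge1$, each entrywise evaluation is a nonzero, hence surjective, linear functional, and the entries use disjoint coefficients; thus $\operatorname{ev}_x$ is a surjective linear map. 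As $V(\fc)$ is equidimensional of codimension $e$ (being Cohen--Macaulay), its preimage $\pi_2^{-1}([x])$ is equidimensional of dimension $N-e$. All fibres of $\pi_2$ then have this pure dimension over the irreducible base $\PP^n$, giving $\dim\mathcal{Z}=n+N-e$.

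Next I would pass through $\pi_1$. By upper semicontinuity of fibre dimension the locus $B:=\{\underline{u}\mid\dim\pi_1^{-1}(\underline{u})\ge n-e+1\}$ is closed, and over it the fibres of $\pi_1$ have dimension $\ge n-e+1$, whence $\dim\mathcal{Z}\ge\dim B+(n-e+1)$. Comparing with $\dim\mathcal{Z}=n+N-e$ forces $\dim B\le N-1$, so $B$ is a proper closed subset of $\mathbb{A}^N$. Hence for general $\cA$ either $V(I_{(\underline{\alpha};\underline{\beta})}(\cA))=\emptyset$ in $\PP^n$, in which case $I_{(\underline{\alpha};\underline{\beta})}(\cA)$ is $\fm$-primary of height $n+1\ge e$ (this is exactly where the hypothesis $n+1\ge e$ is used), or it is nonempty of dimension $\le n-e$, in which case its height is $\ge e$. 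In either case $\operatorname{grade}(\fc R)=ht(I_{(\underline{\alpha};\underline{\beta})}(\cA))\ge e$ for general $\cA$, grade and height agreeing since $R$ is Cohen--Macaulay.

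Finally, for (ii), I would apply the generic perfection theorem \cite{b-v} to the perfect ideal $\fc$ of grade $e$, its resolution $\mathbb{F}_\bullet$, and the $T$-algebra $R$: since $\operatorname{grade}(\fc R)\ge e=\operatorname{grade}(\fc)$, the complex $\mathbb{F}_\bullet\otimes_T R$ remains acyclic and resolves $R/\fc R$, so $I_{(\underline{\alpha};\underline{\beta})}(\cA)=\fc R$ is perfect of grade $e$ over $R$. Perfection yields at once that $R/I_{(\underline{\alpha};\underline{\beta})}(\cA)$ is Cohen--Macaulay and that its height equals its grade, namely $e$, as claimed. The main obstacle is step (i), and within it the clean identity $\dim\mathcal{Z}=n+N-e$: everything hinges on the surjectivity of the evaluation maps $\operatorname{ev}_x$ (which is where $a_j>b_i$ is essential) and on the equidimensionality of $V(\fc)$ supplied by Theorem~\ref{mixedindeterminates}.
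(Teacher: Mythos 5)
Your proof is correct, but it takes a genuinely different route from the paper's. Writing $e:=pq-(p+q+1)m+\sum_{i=1}^m(\alpha_i+\beta_i)$, both arguments draw their generic input from Theorem~\ref{mixedindeterminates}, but the paper, after observing that $ht(I_{(\underline{\alpha};\underline{\beta})}(\cA))\le e$ always holds, reduces to exhibiting a \emph{single} matrix $\cM$ whose cogenerated ideal is Cohen--Macaulay of height $e$: when all $a_j-b_i=1$ it cuts the generic cogenerated ideal in $S=k[x_{1,1},\dots,x_{p,q}]$ by $pq-n-1$ general linear forms---the hypothesis $n+1\ge e$ enters exactly here, to keep those forms a regular sequence modulo the ideal---and identifies the resulting ideal of $R=k[x_0,\dots,x_n]$ with $I_{(\underline{\alpha};\underline{\beta})}(\cM)$ for a matrix $\cM$ of linear forms; for general degrees it then raises the entries of $\cM$ to the powers $a_j-b_i$. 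You never construct a witness: your incidence-variety count shows directly that the locus $B$ of coefficient vectors where the height drops below $e$ is a proper closed subset of your parameter space $\mathbb{A}^N$, and generic perfection (\cite{b-v}) upgrades the inequality $\operatorname{grade}(\fc R)\ge e$ to perfection, hence Cohen--Macaulayness and height exactly $e$. The trade-offs: the paper's construction is more elementary (general linear sections of Cohen--Macaulay schemes rather than the generic perfection theorem) and yields an explicit example, but it leaves implicit both the semicontinuity argument needed to pass from one example to a \emph{general} $\cA$ and the justification that raising entries to powers preserves Cohen--Macaulayness and height---and the cleanest repair of that last step is essentially your generic-perfection argument. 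Your version treats all degrees $a_j-b_i\ge 1$ uniformly and makes ``general'' precise; the only point worth spelling out is that $B$ is closed (not merely constructible) because $\pi_1$ is proper, $\PP^n$ being complete---though constructibility of $B$ together with $\dim B\le N-1$ would already suffice for your conclusion.
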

\begin{proof} We clearly have $ht (I_{(\underline{\alpha} ;\underline{ \beta
    })}(\cA))\le  pq-(p+q+1)m+\sum _{i=1}^m (\alpha _i+\beta _i)$ and it will
  be enough to construct an example of homogeneous $p\times q$ matrix $\cM $
  with entries homogeneous forms $f_i^j\in k[x_0,\cdots ,x_n]$ such that  the
  ideal $I_{(\underline{\alpha} ;\underline{ \beta })}(\cM)$ cogenerated by
  $(\underline{\alpha };\underline{ \beta })$ is Cohen-Macaulay and  $ht
  (I_{(\underline{\alpha} ;\underline{ \beta })}(\cM) )= pq-(p+q+1)m+\sum
  _{i=1}^m (\alpha _i+\beta _i).$ Therefore, let $n+1\ge pq-(p+q+1)m+\sum
  _{i=1}^m (\alpha _i+\beta _i).$ We will distinguish 2 cases: 

\vskip 2mm
\noindent \underline{Case 1.} Assume $a_j-b_i=1$ for all $i,j$ (i.e. the
entries of the matrix $\cA$ are linear forms). By Theorem
\ref{mixedindeterminates} for any choice of $p$, $q$ and $(\underline{\alpha
};\underline{ \beta })$, we have the matrix $\overline{ \cA }=(x_{i,j})$ of
indeterminates and the ideal $I _{(\underline{\alpha };\underline{ \beta}
  )}(\overline{ \cA })\subset S:=k[x_{1,1}\cdots , x_{p,q}]$ cogenerated by
$(\underline{\alpha} ;\underline{ \beta} )$ is Cohen-Macaulay of height $$ht
(I_{(\underline{\alpha };\underline{ \beta })}(\overline{ \cA }))=
pq-(p+q+1)m+\sum _{i=1}^m (\alpha _i+\beta _i).$$ We choose $pq-n-1$  general
linear forms $\ell _1, \cdots , \ell_{pq-n-1}\in S=k[x_{1,1}\cdots , x_{p,q}]$
and we set $S/(\ell _1,\cdots ,\ell _{pq-n-1})\cong k[x_0,x_1, \cdots
,x_n]=:R$. Let us call $I\subset R$ the ideal of $R$ isomorphic to the ideal
$I_{(\underline{\alpha };\underline{ \beta })}(\overline{ \cA })/(\ell
_1,\cdots ,\ell _{pq-n-1})$ of $S/(\ell _1,\cdots ,\ell _{pq-n-1})$. Note that
$I$ is a Cohen-Macaulay ideal of height $ht(I)= pq-(p+q+1)m+\sum _{i=1}^m
(\alpha _i+\beta _i)$ and, in addition, $I$ is nothing but the ideal
$I_{(\underline{\alpha };\underline{ \beta} )}(\cM)$ where $\cM =(m_i^j)$ is a
$p\times q$ homogeneous matrix with entries linear forms in $k[x_0, x_1,
\cdots , x_n]$ obtained from $\overline{ \cA }=(x_{i,j})$ by substituting
using the equations $\ell _1,\cdots ,\ell _{pq-n-1}$, which proves what we
want. 

\vskip 2mm
\noindent \underline{Case 2.} Assume $a_j-b_i\ge 1$ for all $i,j$. In this
case, it is enough to raise the entry $m_i^j$ of the above matrix $\cM$ to the
power $a_j-b_i$.
\end{proof}

\begin{remark} \label{keyrmk} Fix  integers $a_0\le a_1\le ... \le a_{t+c-2}$ and $b_1\le ...\le b_t$. Let $X  \subset \PP^n$, $(X) \in
  W(\underline{b};\underline{a})$ be a general determinantal ideal associated to a $t \times (t+c-1)$ matrix $\cA$ represented by a
  graded morphism as in (\ref{gradedmorfismo}).
  Let
$$
  ({\mathbf X.}) :  X = X_c \subset X_{c-1} \subset ...  \subset X_{2} \subset
  X_{1} \subset  \PP^{n}
$$ be the flag of standard determinantal subschemes that we obtain by
successively deleting columns from the right hand side of $\cA$
and let $\varphi _i$ be the graded morphism associated to the matrix
which defines $X_i$. Assume $a_0>b_t$ and $c\ge 3$. Applying Theorem
\ref{dim-mix-det}, we get the following formula
\begin{equation} \label{assump}
\dim R/(I_{t-1}(\varphi_1) + I_{t}(\varphi_{c-1})) = \dim
  D_{c-1} - 2 \ .
\end{equation}
Even more, if $ \dim D_{c-1} \ge 3$ and $c\ge 4$ we have the equalities
\begin{equation} \label{assump2}
\dim R/(I_{t-1}(\varphi_i) + I_{t}(\varphi_{c-1})) = \dim
  D_{c-1} - i-1
\end{equation}
for $1 \le i \le 2$ which will play an important role in the next section.
\end{remark}

\begin{remark} \label{dim00new} If $t=2$ we can prove \eqref{assump} directly
  as follows. Let $\cA =[\cC,\cD,v]$ where $ \cC$ is a 2 by 2 general
  enough matrix in the variables $x_0,x_1,x_2,x_3$ (e.g. with rows
  $(x_0^{a_0-b_1}, x_1^{a_1-b_1})$ and $(x_2^{a_0-b_2}, x_3^{a_1-b_2})$ ), $v$
  is some column and $\cD$ is a 2 by $c-2$ matrix whose first row is
  $(x_4^{a_2-b_1}, x_5^{a_3-b_1}, ...,x_c^{a_{c-2}-b_1},0)$ and whose second
  row is $(0,x_4^{a_3-b_2}, x_5^{a_4-b_2},...,x_c^{a_{c-1}-b_2})$. Note that
  $\varphi_1$ corresponds to $\cC$ and $\varphi_{c-1}$ to $[\cC,\cD]$. Since
  $\cC$ is general, we get $\codim_R R/I_{t-1}(\varphi_1)=4$, i.e. the radical
  of $I_{t-1}(\varphi_1)$ is $(x_0,x_1,x_2,x_3)$. Since it is clear that the
  radical of the ideal generated by the maximal minors of $\cD$ is
  $(x_4,x_5,...,x_c)$, it follows that the scheme $X_{c-1}= \Proj(D_{c-1}) \in
  W(\underline{b};\underline{a'})$ given by the maximal minors of $[\cC,\cD]$
  satisfies \eqref{assump}. The general element $X$ of $
  W(\underline{b};\underline{a})$ will therefore have a flag where $X_{c-1}$
  satisfies \eqref{assump}.
\end{remark}


\section{Smoothness and  dimension of the determinantal locus}

This section is the heart of the paper and contains the results which
generalize quite a lot of our previous contributions to problems (1)-(3)
stated in the introduction.

\begin{proposition}\label{mainnewprop} With notation as in
  Remark~\ref{keyrmk}, let $c \ge 3$ and suppose \eqref{assump} (this holds if
  $a_{0} > b_t$). If $ a_{t+c-2}> a_{t-2} \ $ then \eqref{maineq} holds for
  $X:=X_{c} \subset Y:=X_{c-1}$, i.e.
\begin{equation*}\label{maineqq}
  _0\!
  \hom_R(I_{D_{c-1}},I_{D_c/D_{c-1}}) \le  \sum _{j=0}^{t+c-3}
  \binom{a_j-a_{t+c-2}+n}{n} .
\end{equation*} In particular we get
$\dim W(\underline{b};\underline{a})= \lambda_c+ K_3 +
K_4+...+K_c$ provided $\dim
W(\underline{b};\underline{a'}) =  \lambda_{c-1}+ K_3 +
K_4+...+K_{c-1}$ where $\underline{a'} = a_0, a_1,...,
a_{t+c-3}$.
\end{proposition}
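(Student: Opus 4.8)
The plan is to establish the numerical bound \eqref{maineq} for the pair $X_c\subset X_{c-1}$ by splitting the relevant $\Hom$-group with the exact sequence \eqref{1} and estimating the two resulting pieces separately, along the lines of the argument recalled for Remark~\ref{dim0new} but now keeping track of a nonzero (rather than vanishing) contribution. Taking $i=c-1$ in \eqref{1} and passing to degree-zero parts gives
\begin{equation*}
{}_0\!\hom_R(I_{D_{c-1}},I_{c-1}) \le {}_0\!\hom_R(I_{c-2},I_{c-1}) + {}_0\!\hom_R(I_{D_{c-2}},I_{c-1}),
\end{equation*}
so it suffices to bound the two summands by the corresponding pieces of $\sum_{j=0}^{t+c-3}\binom{a_j-a_{t+c-2}+n}{n}$.

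First I would dispose of the term ${}_0\!\hom_R(I_{c-2},I_{c-1})$. By the isomorphism recalled just before the proposition, $\Hom_R(I_{c-2},I_{c-1})=\Hom_{D_{c-2}}(I_{c-2},I_{c-1})\cong D_{c-1}(a_{t+c-3}-a_{t+c-2})$, where this uses $\depth_{I(Z_{c-2})}D_{c-1}\ge 2$, itself a consequence of \eqref{assump}. Since $D_{c-1}$ is a graded quotient of $R$, its degree-$(a_{t+c-3}-a_{t+c-2})$ piece has dimension at most $\binom{a_{t+c-3}-a_{t+c-2}+n}{n}$, which is precisely the $j=t+c-3$ summand of the right-hand side of \eqref{maineq}.

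The heart of the proof is the estimate
\begin{equation*}
{}_0\!\hom_R(I_{D_{c-2}},I_{c-1}) \le \sum_{j=0}^{t+c-4}\binom{a_j-a_{t+c-2}+n}{n};
\end{equation*}
this is the term that was forced to vanish in \cite{KM} under stronger hypotheses and that must now merely be bounded, and it is where both standing assumptions enter. Assumption \eqref{assump} guarantees that the non-l.c.i.\ locus $Z=V(I_{t-1}(\varphi_1))\cap X_{c-1}$ has codimension $\ge 2$ in the Cohen-Macaulay scheme $X_{c-1}$, hence $\depth_{I(Z)}I_{c-1}\ge 2$ (recall $I_{c-1}$ is maximal Cohen-Macaulay), so the comparison map \eqref{NM} is an isomorphism in degree $i=0$ and identifies $\Hom_R(I_{D_{c-2}},I_{c-1})$ with $H^0_*(U,\cH om_{\cO}(\tilde I_{D_{c-2}},\tilde I_{c-1}))$, $U=X_{c-1}-Z$. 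On $U$ the sheaf $\tilde I_{c-1}$ is invertible with $\tilde I_{c-1}(a_{t+c-2})$ equal to the $\cO_{X_{c-1}}$-dual $\cM_{c-1}^{*}$, and $\tilde I_{D_{c-2}}$ is the ideal sheaf of the l.c.i.\ $X_{c-2}$, so the computation reduces to counting degree-zero global sections of a twist of the normal sheaf of $X_{c-2}$ along $X_{c-1}$. Equivalently, and closer to the method of \cite{KM}, one makes the minimal generators of $I_{D_{c-2}}$ (the maximal minors of $\varphi_{c-2}$) and of $I_{c-1}$ explicit through the Eagon-Northcott complex \eqref{EN}: the condition $a_{t+c-2}>a_{t-2}$ pins the initial degree of $I_{c-1}$ at $a_{t+c-2}+\sum_{j=0}^{t-2}a_j-\sum_{k=1}^{t}b_k$, so every minor of $\varphi_{c-2}$ of degree below this bound is sent to $0$ by any degree-zero homomorphism, and the surviving freedom is measured by $\sum_{j=0}^{t+c-4}\binom{a_j-a_{t+c-2}+n}{n}$. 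Adding the two estimates yields \eqref{maineq}.

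Finally, the ``in particular'' assertion is immediate from Proposition~\ref{main1}: its three hypotheses are, respectively, the assumed equality $\dim W(\underline{b};\underline{a'})=\lambda_{c-1}+K_3+\cdots+K_{c-1}$ (which a fortiori gives the required lower bound), the depth condition $\depth_{I(Z)}B\ge 2$ supplied once more by \eqref{assump}, and the inequality \eqref{maineq} just proved; the proposition then returns $\dim W(\underline{b};\underline{a})=\lambda_c+K_3+\cdots+K_c$. The step I expect to be the genuine obstacle is the bound on ${}_0\!\hom_R(I_{D_{c-2}},I_{c-1})$: the delicate point is to show that the relations among the maximal minors of $\varphi_{c-2}$, together with the degree shift imposed by $a_{t+c-2}>a_{t-2}$, cut the naive count down to \emph{exactly} the binomial sum, rather than to the strictly larger bound that term-by-term estimates (or a crude recursion deleting one column at a time) produce.
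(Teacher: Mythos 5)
Your overall skeleton (split via \eqref{1}, identify the kernel term with a twist of $D_{c-1}$ using depth and \eqref{NM}, finish with Proposition~\ref{main1}) is the right one, and your treatment of the term ${}_0\!\hom_R(I_{c-2},I_{c-1})$ and of the ``in particular'' statement agrees with the paper. But the step you yourself flag as ``the genuine obstacle'' --- the bound ${}_0\!\hom_R(I_{D_{c-2}},I_{c-1})\le\sum_{j=0}^{t+c-4}\binom{a_j-a_{t+c-2}+n}{n}$ --- is asserted, not proved: you argue that generators of $I_{D_{c-2}}$ of degree below the initial degree of $I_{c-1}$ must map to zero (true), and then claim that ``the surviving freedom is measured by'' the binomial sum (unsupported). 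A generator of $I_{D_{c-2}}$ of degree $d\ge s(I_{c-1})$ can a priori map to any element of $(I_{c-1})_d$, a space whose dimension is in general much larger than one; cutting this naive count down would require controlling the syzygies of the maximal minors of $\varphi_{c-2}$, which you do not do. So the proposal has a genuine gap exactly at its central step.

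Ironically, the method you dismiss in your last sentence --- ``a crude recursion deleting one column at a time'' --- is precisely how the paper closes this gap, and it yields exactly (not more than) the required bound. The paper iterates the sequence \eqref{1} for $j=c-2,c-3,\dots,1$, each kernel $\Hom_{D_j}(I_j,I_{c-1})\cong D_{c-1}(a_{t+j-1}-a_{t+c-2})$ contributing $\binom{a_{t+j-1}-a_{t+c-2}+n}{n}$ in degree zero; for this one needs $\depth_{I(Z_j)}D_{c-1}\ge 2$ for \emph{all} $0<j<c-1$, which follows from \eqref{assump} because $I_{t-1}(\varphi_1)\subset I_{t-1}(\varphi_j)$. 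The recursion terminates not at $I_{D_{c-2}}$ but at $I_{D_1}$: since $X_1$ is a hypersurface of degree $\ell_1$, the ideal $I_{D_1}$ is principal and ${}_0\!\hom_R(I_{D_1},I_{c-1})=\dim I_{c-1}(\ell_1)_0$, and this is where the hypothesis $a_{t+c-2}>a_{t-2}$ actually enters. Via \eqref{EN} the initial degree of $I_{c-1}$ is $s(I_{c-1})=\ell_c-\sum_{j=t-1}^{t+c-3}a_j$ and $\ell_1-s(I_{c-1})=a_{t-1}-a_{t+c-2}\le 0$; hence $\dim I_{c-1}(\ell_1)_0=0$ unless $a_{t-1}=a_{t+c-2}$, in which case $a_{t+c-2}>a_{t-2}$ forces the lowest-degree minimal generator of $I_{c-1}$ to be unique, giving $\dim I_{c-1}(\ell_1)_0=1=\binom{a_{t-1}-a_{t+c-2}+n}{n}$. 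Summing the kernel contributions and this bottom term gives precisely $\sum_{j=0}^{t+c-3}\binom{a_j-a_{t+c-2}+n}{n}$, since the terms with $j\le t-2$ vanish; this is \eqref{maineq}. Your intermediate inequality for ${}_0\!\hom_R(I_{D_{c-2}},I_{c-1})$ is in fact correct, but the only available proof of it is this same recursion applied one level down --- there is no shortcut through generator counting alone.
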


\begin{proof}
  We {\it claim} that $\depth_{I(Z_{j})}D_{c-1}\ge 2$ for all $j$ satisfying
  $0 < j < c-1$. Indeed by the discussion right after Remark~\ref{dep} we see
  that we may take $I(Z_j)$ to be the ideal $I_{t-1}(\varphi_j) \subset R$ of
  submaximal minors of the matrix which defines $X_j$, and $I(Z_j)D_{c-1}$ to
  be $(I_{t-1}(\varphi_j) + I_{t}(\varphi_{c-1}))D_{c-1}$. It follows that
  $\depth_{I_{t-1}(\varphi_j)}D_{c-1}\ge 2$ for $j=1$, i.e. $\dim D_{c-1} -
  \dim D_{c-1}/(I_{t-1}(\varphi_1) + I_{t}(\varphi_{c-1}))D_{c-1} \ge 2$,
  implies the claim. Hence we conclude the proof of the claim by \eqref{assump}.

  For every $j$, $0 < j < c-1$, put $a:=a_{t+j-1}-a_{t+c-2}$. We {\it claim}
  that $$\Hom_{D_j}(I_{j}, I_{c-1}) \cong D_{c-1}(a) \ . $$ To prove this
  claim we remark that $\depth_{I(Z_{j})}I_{c-1}\ge 2$ since $I_{c-1}$ is
  maximally CM. Using that $\cI_{j}$ is locally free on $U_{j}$ and the
  arguments in the text before \eqref{1}, see the text accompanying \eqref{flag}
  for $j = 1$, we get $$\cI_{j}\otimes_{\cO _{X_{j}}}\cO_{X_{c-1}} |_{U_{j}}
  \cong \cI_{j}\otimes_{\cO _{X_{j}}}\cO_{X_{j+1}}\otimes
  ...\otimes_{\cO_{X_{c-2}}}\cO_{X_{c-1}} |_{U_{j}} \cong
  \cI_{c-1}(-a)|_{U_{j}}.$$ It follows that $\ \cH om_{\cO_{X_{j}}}(\cI _{j},
  \cI_{c-1})(-a) \cong \cH om_{\cO_{X_{c-1}}}(\cI_{c-1}, \cI_{c-1}) \cong
  \cO_{X_{c-1}}$ are isomorphic as sheaves on $U_{j} \cap X_{c-1}$, i.e. we
  get that $H^0_{*}(U_{j},\cH om(\cI _{j}, \cI_{c-1})) \cong
  H^0_{*}(U_{j},\cO_{X_{c-1}})(a)$ and hence the claim from
  $\depth_{I(Z_{j})}D_{c-1}= \depth_{I(Z_{j})}I_{c-1}\ge 2$ and \eqref{NM}.

  Now we repeatedly use the exact sequence
  \begin{equation} \label{47} 0 \rightarrow D_{c-1}(a) \cong \Hom_{D_j}(I_{j},
    I_{c-1}) \rightarrow \Hom_R(I_{D_{j+1}},I_{c-1}) \rightarrow \Hom
    _R(I_{D_{j}},I_{c-1}) \rightarrow \
\end{equation}
for $j=c-2, c-3,...,1$. Since $a_{t+j-1} \le a_{t+c-2}$, we have $\dim
D_{c-1}(a)_0 = \binom{a+n}{n}$. It follows that $$ \ _0\!
\hom(I_{D_{c-1}},I_{c-1}) \le\ _0\! \hom(I_{D_{1}},I_{c-1}) + \sum
_{i=t}^{t+c-3} \binom{a_i-a_{t+c-2}+n}{n} $$ where we have replaced
$a_{t+j-1}$ by $a_i$ in which case $1 \le j \le c-2$ corresponds to $t \le i
\le t+c-3$.

It remains to prove that $ \ _0\! \hom(I_{D_{1}},I_{c-1}) \le \sum _{i=0}^{t-1}
\binom{a_i-a_{t+c-2}+n}{n} $ since we then by Proposition~\ref{main1} get the
dimension formula. Using that $X_1=\Proj(D_1)$ is a hypersurface of degree
$\ell_1$, we find $\ _0\! \hom(I_{D_{1}},I_{c-1}) \cong \ \dim
I_{c-1}(\ell_1)_0$ where $\ell_k=\sum_{j=0}^{t+k-2}a_j-\sum_{i=1}^tb_i$. Now
we have to make the degrees of the minimal generators of $I_{c-1}$
explicit. 
Taking a close look at \eqref{EN}, we see that a minimal generator $f$ of
$I_{c-1} \cong I_{D_c}/I_{D_{c-1}}$ of the smallest possible degree has degree
$s(I_{c-1}):=\ell _c-\sum _{j=t-1}^{t+c-3} a_{j}$ because $a_0\le a_1\le
\cdots \le a_{t+c-2}$. Since $\ell_1-s(I_{c-1})= a_{t-1}-a_{t+c-2} \le 0$ by
the definition of $\ell_k$, we get either $ \dim I_{c-1}(\ell_1)_0=0$ or $
a_{t-1}=a_{t+c-2}$. In the latter case the assumption $ a_{t+c-2}> a_{t-2}$
implies that the degrees of all minimal generators, except for $f$, 
are strictly greater than $s(I_{c-1})$, i.e.
we get $ \dim I_{c-1}(\ell_1)_0=\binom{a_{t-1}-a_{t+c-2}+n}{n} $ and we are
done.
\end{proof}

By repeatedly using Proposition~\ref{mainnewprop}  we get the
\begin{theorem} \label{codcdim0} Let $X \subset \PP^{n}$, $(X) \in
  W(\underline{b};\underline{a})$, be a general determinantal scheme and
  suppose $a_0 > b_t$. Moreover if $c \ge 6$ we suppose $a_{t+3} > a_{t-2}$
  (or $a_{t+4} > a_{t-2}$ provided $char k = 0$) and if $3 \le c \le 5$ we
  suppose $ \ a_{t+c-2}>a_{t-2} $. Then 
  we have $$ \dim W(\underline{b};\underline{a})= \lambda _c+K_3+K_4+ \cdots
  +K_c \ .$$
 \end{theorem}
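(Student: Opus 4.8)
The plan is to prove the equality by induction on the codimension, with Proposition~\ref{mainnewprop} as the inductive step and the settled range \eqref{2c5} as the base. First I would form the flag $X = X_c \subset X_{c-1} \subset \cdots \subset X_1 \subset \PP^n$ of Remark~\ref{keyrmk}, in which $X_k$ is a general standard determinantal scheme of codimension $k$ attached to the truncated data $a_0, \ldots, a_{t+k-2}$ and $b_1, \ldots, b_t$. Since $a_0 > b_t$ yields $a_j > b_i$ for all $i, j$, the reasoning of Remark~\ref{keyrmk} (via Theorem~\ref{dim-mix-det}, which applies since $k \le c \le n$) guarantees the hypothesis \eqref{assump} of Proposition~\ref{mainnewprop} at every codimension $k \ge 3$ of the flag.

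Next I would run the induction upward. Granting the formula $\dim W(\underline{b}; a_0, \ldots, a_{t+k-3}) = \lambda_{k-1} + K_3 + \cdots + K_{k-1}$ at codimension $k-1$, Proposition~\ref{mainnewprop} upgrades it to $\dim W(\underline{b}; a_0, \ldots, a_{t+k-2}) = \lambda_k + K_3 + \cdots + K_k$ as soon as $a_{t+k-2} > a_{t-2}$, so the only thing to verify is that this inequality holds at each step. Here the monotonicity $a_0 \le \cdots \le a_{t+c-2}$ carries the single hypothesis along: in general characteristic $a_{t+3} > a_{t-2}$ forces $a_{t+k-2} \ge a_{t+3} > a_{t-2}$ for every $k \ge 5$, so each step from codimension $4$ up to codimension $c$ is legitimate, while in characteristic $0$ the weaker $a_{t+4} > a_{t-2}$ forces $a_{t+k-2} > a_{t-2}$ for every $k \ge 6$, so each step from codimension $5$ up to codimension $c$ is legitimate. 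For $3 \le c \le 5$ the lone hypothesis $a_{t+c-2} > a_{t-2}$ is precisely the inequality needed for the single step from codimension $c-1$ to $c$.

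It then remains to anchor the induction. In general characteristic the descent bottoms out at codimension $4$ (or at codimension $c-1$ when $3 \le c \le 5$), and in the characteristic-$0$ variant, available for $c \ge 6$, at codimension $5$. In every case the base codimension is at most $c-1$, hence strictly less than $n$ because $n \ge c$. Therefore $n$ exceeds the base codimension and \eqref{2c5} delivers the dimension formula there, the assumption $char\, k = 0$ entering exactly because the codimension-$5$ instance of \eqref{2c5} requires it. Feeding this base case back through the inductive step yields the asserted formula at codimension $c$.

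The conceptual heart is already contained in Proposition~\ref{mainnewprop} (which in turn rests on Proposition~\ref{main1} and the depth input \eqref{assump}), so the residual difficulty is bookkeeping: confirming that the single inequality on $a_{t+3}$ (or $a_{t+4}$) survives every reduction thanks to the monotonicity of the $a_j$, and that the induction terminates at a codimension low enough for \eqref{2c5} to apply. I expect the most delicate point to be the characteristic-$0$ branch, where trading $a_{t+3}$ for $a_{t+4}$ costs one rung of descent, thereby pushing the base case up to codimension $5$, precisely where \eqref{2c5} requires $char\, k = 0$.
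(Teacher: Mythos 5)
Your proposal is correct and follows essentially the same route as the paper: induction on codimension with Proposition~\ref{mainnewprop} as the step (its hypothesis \eqref{assump} supplied by $a_0 > b_t$ via Remark~\ref{keyrmk}, and the inequality $a_{t+k-2} > a_{t-2}$ carried along by monotonicity of the $a_j$), anchored by \eqref{2c5} at codimension $4$ in general characteristic and at codimension $5$ when $char\, k = 0$. The paper phrases this as a downward reduction to the cases $c=5$ (resp.\ $c=6$) followed by its first paragraph, but the content, including the verification that $n$ exceeds the base codimension so that \eqref{2c5} applies, is identical to yours.
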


\begin{proof}
  If $3 \le c \le 6$ ($char k = 0$ if $c=6$) and $ \ a_{t+c-2}>a_{t-2} $ we
  use \eqref{2c5} to find $ \dim W(\underline{b};\underline{a'})$ and we
  conclude the proof by Proposition~\ref{mainnewprop}.

  If $c \ge 6$ and $a_{t+3} > a_{t-2}$ (resp. $a_{t+4} > a_{t-2}$ if $c \ge 7$)
  we repeatedly use Proposition~\ref{mainnewprop} to reduce to the case $c=5$
  (resp. $c=6$) and we conclude by the first part of the proof (note that the
  assumption $ \ a_{t+c-2}>a_{t-2} $ of Proposition~\ref{mainnewprop} is
  satisfied in this induction).
\end{proof}

\begin{remark} \label{dim00new} We expect that the assumption $a_0>b_t$ can be
  weakened in Theorem~\ref{codcdim0}, as well as in \eqref{assump}. At least
  it does for $c=3$ provided we assume $\ a_{i-2} > b_i$ for $ 2 \le i\le t$.
  Indeed since $ I_{t}(\varphi_2) \subset I_{t-1}(\varphi_1)$ we first show
  \eqref{assump} 
  using Remark~\ref{dep}. Then the proof above
  applies to conclude as in Theorem~\ref{codcdim0} provided $ \
  a_{t+1}>a_{t-2} $, cf. Remark~\ref{dim0new}.
\end{remark}

If the condition \eqref{assump2} is satisfied,
 then we can prove the following result for $ \overline
 {W(\underline{b};\underline{a})}$ to be a generically smooth irreducible
 component.
\begin{theorem} \label{codcomp} Let $X \subset \PP^{n}$, $(X) \in
  W(\underline{b};\underline{a})$, be a general determinantal scheme of
  dimension $n-c\ge 1$, let $c > 2$ and let $X=X_c\subset X_{c-1}\subset
  ...\subset X_2\subset \PP^{n}$, $X_i = \Proj(D_i)$, be the flag obtained by
  successively deleting columns from the right hand side. If $a_0 > b_t$,
 $$ _0\! \Ext ^1_{D_2}(I_{D_2}/I^2_{D_2},I_{2})=0 \ \ {and} \ \ _0\! \Ext
 ^1_{D_3}(I_{D_3}/I^2_{D_3},I_{i})=0 \mbox{ for } i=3,...,c-1 \ , $$ then
 $\overline{ W(\underline{b};\underline{a})}$ is a generically smooth
 irreducible component of the Hilbert scheme $\Hi ^p(\PP^{n})$.
\end{theorem}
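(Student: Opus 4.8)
The plan is to show that the Hilbert scheme $\Hi^p(\PP^n)$ is smooth at the general point $(X)$ by proving that the tangent space $_0\!\Hom_R(I_X,A)$ has dimension equal to $\dim W(\underline{b};\underline{a})$, which we already know from Theorem~\ref{codcdim0} (whose hypothesis $a_0>b_t$ is in force, giving \eqref{assump} and hence the dimension formula). Since $\dim_{(X)}\Hi^p(\PP^n)\ge \ _0\!\hom_R(I_X,A)\ge \dim W$ always, an equality here forces $\overline{W(\underline{b};\underline{a})}$ to be a generically smooth irreducible component. So the real content is the \emph{upper} bound $\ _0\!\hom_R(I_X,A)\le \dim W(\underline{b};\underline{a})$.

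First I would relate the deformations of $X$ inside $\PP^n$ to deformations of the flag $(\mathbf{X.})$ via the Hilbert flag scheme, exactly as in \cite{KM}: there is a natural map from the flag scheme to $\Hi^p(\PP^n)$, and one bounds the tangent space of $\Hi^p$ by the tangent space of the flag scheme plus the fibre dimension. The tangent space to the flag scheme decomposes into the normal-module contributions $\ _0\!\Hom_{D_i}(I_{D_i}/I^2_{D_i},\cdot)$ together with the ``gluing'' terms coming from each inclusion $X_{i+1}\subset X_i$. The strategy is to peel off the flag one codimension at a time, using the short exact sequences \eqref{Mi}, \eqref{Di} and the module description $\cI_i(a_{t+i-1})\cong \cM_i^\vee$, so that the normal module of $X$ is controlled by the normal modules of the $X_i$ plus the Hom-modules $\Hom_R(I_{D_i},I_i)$ computed in the proof of Proposition~\ref{mainnewprop}.

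The key reduction is then to show that the obstruction spaces vanish. This is precisely where the two hypotheses enter: $\ _0\!\Ext^1_{D_2}(I_{D_2}/I^2_{D_2},I_2)=0$ kills the $\Ext^1$-contribution at the bottom of the flag (codimension $2$, where \eqref{Di} only holds away from $V(I_{t-1})$ and the depth argument is most delicate), and $\ _0\!\Ext^1_{D_3}(I_{D_3}/I^2_{D_3},I_i)=0$ for $i=3,\dots,c-1$ kills the higher ones. Using \eqref{assump2} together with the comparison isomorphism \eqref{NM} and the depth estimates $\depth_{I(Z_j)}D_{c-1}\ge 2$ from Remark~\ref{dep} and Remark~\ref{keyrmk}, these $\Ext^1$-vanishings propagate to show that the long exact sequences splitting off each $D_{c-1}(a)$ in \eqref{47} are in fact short exact on global sections in degree $0$, so that the inequalities in the proof of Proposition~\ref{mainnewprop} become equalities. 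Assembling all the pieces, the dimension of the flag-scheme tangent space equals the computed value $\lambda_c+K_3+\cdots+K_c$, with no excess from obstructions.

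The main obstacle I anticipate is the codimension-two end of the flag: there \eqref{Di} is only an isomorphism of sheaves on $U=Y-V(I_{t-1}(\varphi_1))$ rather than an equality of modules, so the passage between $\Ext$-groups over $R$ and cohomology on $U$ requires the full strength of \eqref{NM} and the hypothesis $\ _0\!\Ext^1_{D_2}(I_{D_2}/I^2_{D_2},I_2)=0$ to absorb the discrepancy supported on $V(I_{t-1})$. I would treat $D_2$ separately and only invoke the $D_3$-vanishing for the remaining steps $i=3,\dots,c-1$, exactly as the statement is structured. Once the tangent-space computation is complete and matches Theorem~\ref{codcdim0}, the smoothness and component assertions follow formally, since equality of $\dim W$ with the tangent dimension at a general point means $\Hi^p(\PP^n)$ is smooth there and $\overline{W(\underline{b};\underline{a})}$ is a full component.
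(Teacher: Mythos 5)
Your strategy cannot get started, because its first step uses a fact that is not available under the hypotheses of Theorem~\ref{codcomp}. You propose to prove smoothness by showing that the tangent-space dimension ${}_0\!\hom_R(I_X,A)$ equals $\dim W(\underline{b};\underline{a})$, ``which we already know from Theorem~\ref{codcdim0}.'' But Theorem~\ref{codcdim0} does not follow from $a_0>b_t$ alone: it also requires $a_{t+c-2}>a_{t-2}$ (for $3\le c\le 5$) or $a_{t+3}>a_{t-2}$ (for $c\ge 6$), and these numerical conditions are \emph{not} among the hypotheses of Theorem~\ref{codcomp}. This is not an oversight in the statement: Theorem~\ref{codcomp} deliberately makes no claim about $\dim W(\underline{b};\underline{a})$, and it is only Corollary~\ref{cod6}, which adds precisely such numerical hypotheses, that combines the component statement with the dimension formula. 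So any proof pivoting on the equality ${}_0\!\hom_R(I_X,A)=\lambda_c+K_3+\cdots+K_c$ is attempting a stronger statement that is unknown (possibly false) in this generality. The paper avoids this entirely by invoking the criterion of \cite{KM}, Theorem 5.1: it suffices to prove
$$ {}_0\!\Ext^1_{D_i}(I_{D_i}/I^2_{D_i},I_{i})=0 \quad \mbox{for } i=2,\dots,c-1 \ , $$
which involves no dimension count at all. (A minor slip in the same passage: your inequality chain is backwards, since the tangent-space dimension bounds $\dim_{(X)}\Hi^p(\PP^{n})$ from \emph{above}; the correct chain is $\dim W\le\dim_{(X)}\Hi^p(\PP^{n})\le{}_0\!\hom_R(I_X,A)$.)

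The second gap is that you never bridge the distance between what is assumed and what any such criterion needs: the hypotheses give vanishing only over $D_2$ (with values in $I_2$) and over $D_3$ (with values in $I_i$, $i=3,\dots,c-1$), whereas one needs vanishing of ${}_0\!\Ext^1_{D_i}(I_{D_i}/I^2_{D_i},I_{i})$ for \emph{every} $i=2,\dots,c-1$. This bridge is the real content of the paper's proof: induction on $c$ reduces to the case $i=c-1$, and one then constructs a chain of injections
$$ {}_0\!\Ext^1_{D_{j+1}}(I_{D_{j+1}}/I^2_{D_{j+1}},I_{c-1})\hookrightarrow\ {}_0\!\Ext^1_{D_{j}}(I_{D_{j}}/I^2_{D_{j}},I_{c-1}) \ , $$
terminating in the group over $D_3$, which vanishes by hypothesis. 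Establishing these injections requires (i) $\depth_{I(Z_j)}D_{c-1}\ge 3$ for $j\ge 2$, which comes from \eqref{assump2} (this is where $a_0>b_t$ and Theorem~\ref{dim-mix-det} actually enter) --- note that the bound $\depth_{I(Z_j)}D_{c-1}\ge 2$ you quote from Proposition~\ref{mainnewprop} suffices only for $\Hom/H^0$ comparisons via \eqref{NM}, not for the $\Ext^1/H^1$ comparisons needed here; (ii) right-exactness of \eqref{47}, proved by showing $\Ext^1_{D_j}(I_j,I_{c-1})=0$ via $H^1_{*}(U_j,\cO_{X_{c-1}}(a))=0$; and (iii) sheafifying the resulting short exact sequence \eqref{57} and identifying the $H^1$-groups on $U_j$ with the two $\Ext^1$-groups. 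Your proposal instead asserts that the hypotheses make the inequalities of Proposition~\ref{mainnewprop} into equalities --- but those concern ${}_0\!\Hom$-groups and the dimension formula, not obstructions --- and it assigns the $D_2$-hypothesis the role of repairing \eqref{Di} at the codimension-two end, which is not its function: the $D_2$- and $D_3$-vanishings are simply the $i=2,3$ cases of the required list, assumed outright because the injection chain cannot be pushed below $D_3$ (over $Z_1$ one only has depth $\ge 2$).
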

\begin{remark} \label{3c4} If $n-c \ge 2$ then we have $_0\! \Ext
  ^1_{D_2}(I_{D_2}/I^2_{D_2},I_{2})=0$ provided $c=3$, and $_0\! \Ext
  ^1_{D_2}(I_{D_2}/I^2_{D_2},I_{2}) = \ _0\! \Ext
  ^1_{D_3}(I_{D_3}/I^2_{D_3},I_{3})=0$ provided $c=4$ by \cite{KM},
  (5.4)-(5.8). 
Hence the conclusion of
  Theorem~\ref{codcomp} holds provided $n-c \ge 2$ and $3 \le c \le 4$.
  Moreover if $n-c \ge 1$ and $c>4$ then both $ \Ext$-groups above still
  vanish by \cite{KM} and we may in this case replace the assumption of
  Theorem~\ref{codcomp} given by the displayed formula with $$ \ _0\! \Ext
  ^1_{D_3}(I_{D_3}/I^2_{D_3},I_{i})=0 \mbox{ for } i=4,...,c-1 \ . $$ Note
  that in the case $n-c \ge 1$ and $c=2$, the conclusion of
  Theorem~\ref{codcomp} holds and moreover, $\Hi ^p(\PP^{n})$ is smooth at any
  $(X)\in W(\underline{b};\underline{a})$ by \cite{elli}.
\end{remark}

\begin{remark} For $c > 2$ one knows that
$\Hi ^p(\PP^n)$ is not always smooth at any $(X) \in
W(\underline{b};\underline{ a})$ \cite{MDPi}. Indeed, since
  $W(\underline{b};\underline{a})$ is irreducible, it is not difficult to find
  singular points of $\Hi ^p(\PP^{n})$ by first computing its tangent space
  dimension, $h^0(\cN _X)$, at a
  general $(X) \in W(\underline{b};\underline{a})$, using Macaulay 2. Then by
  experimenting with special choices of $(X_0) \in
  W(\underline{b};\underline{a})$ one may find $h^0(\cN _X)<h^0(\cN _{X_0})$
  which means that $\Hi ^p(\PP^{n})$ is singular at $(X_0)$, see \cite{siq}
  which even computes the obstructions of deformations, using Singular, in a
  related case.
\end{remark}

\begin{proof} Due to Theorem 5.1 of \cite{KM} we must show that 
\begin{equation}\label{exteq}  _0\! \Ext
^1_{D_i}(I_{D_i}/I^2_{D_i},I_{i})=0 \mbox{ for } i=2,...,c-1.
\end{equation} By assumption we need to prove the vanishing \eqref{exteq} for
$i=4,...,c-1$ and $c > 4$. By induction  on $c$ it suffices to show
it for
$i=c-1$, $c \ge 5$. Hence it suffices to see that there exist injections
\begin{equation}\label{exteq2}  _0\!
\Ext^1_{D_{j+1}}(I_{D_{j+1}}/I^2_{D_{j+1}},I_{c-1})\hookrightarrow \ _0\!
\Ext^1_{D_{j}}(I_{D_{j}}/I^2_{D_{j}},I_{c-1}) \mbox{ for } j=2,...,c-2.
\end{equation}
By \eqref{assump2} and the arguments in the first paragraph of
the proof of Proposition~\ref{mainnewprop} we may suppose
$\depth_{I(Z_{j})}D_{c-1}\ge 3$ for all $j$ satisfying $1 < j < c-1$.

We {\it claim} that the left-exact sequence \eqref{47} is also right-exact,
i.e. that the rightmost map of the $\Hom$-groups is surjective for $1 < j <
c-1$. To show this, it certainly suffices to prove $ \
\Ext^1_{R}(I_{j},I_{c-1})=0$. However, by paying closer attention to the
modules of \eqref{47} we shall see that also $ \
\Ext^1_{D_{j}}(I_{j},I_{c-1})=0$ for $1 < j < c-1$ suffices for proving the
claim. Indeed if we apply $ (-) \otimes_R D_j$ to $0 \to I_{D_{j}} \to
I_{D_{j+1}} \to I_{j} \to 0$ we get the right-exact sequence $ I_{D_{j}}
\otimes_R D_j \to I_{D_{j+1}} \otimes_R D_j \to I_{j} \to 0$ where $
I_{D_{j+1}} \otimes_R D_j \cong I_{D_{j+1}}/ I_{D_{j}} \cdot I_{D_{j+1}}$ and
since $I_{D_{j}} \otimes_R D_{j+1} \cong I_{D_{j}}/ I_{D_{j}} \cdot
I_{D_{j+1}}= \ker ( I_{D_{j+1}}/ I_{D_{j}} \cdot I_{D_{j+1}} \to I_{j})$ we
obtain the exact sequence $$ 0 \ra I_{D_{j}} \otimes_R D_{j+1} \ra I_{D_{j+1}}
\otimes_R D_j \ra I_{j} \ra 0$$ to which we apply $ \
\Hom_{D_{j}}(-,I_{c-1})$. Then we get exactly \eqref{47} continued to the
right by $ \ \Ext^1_{D_{j}}(I_{j},I_{c-1})$. Thus the vanishing of $ \
\Ext^1_{D_{j}}(I_{j},I_{c-1})$ implies that \eqref{47} is right-exact.

To see that $ \ \Ext^1_{D_{j}}(I_{j},I_{c-1})=0$ we use the isomorphism $\ \cH
om_{\cO_{X_{j}}}(\cI _{j}, \cI_{c-1})(-a)\arrowvert_{U_{j} \cap X_{c-1}} \cong
\cO_{X_{c-1}}\arrowvert_{U_{j} \cap X_{c-1}}$ which we obtained in the proof of
Proposition~\ref{mainnewprop}. By  (\ref{NM}) it follows that $$ \
\Ext^1_{D_{j}}(I_{j},I_{c-1}) \cong H^1_{*}(U_{j},\cH om(\cI _{j}, \cI_{c-1}))
\cong H^1_{*}(U_{j},\cO_{X_{c-1}}(a)) = 0$$ because we have
$\depth_{I(Z_{j})}D_{c-1}\ge 3$ and hence $\depth_{I(Z_{j})}I_{c-1}\ge 3$
($I_{c-1}$ is maximally CM). This proves the claim.

Now we can rewrite the exact sequence  \eqref{47} as
\begin{equation} \label{57} 0 \rightarrow D_{c-1}(a) \rightarrow
  \Hom_{D_{j+1}}(I_{D_{j+1}}/I_{D_{j+1}}^2,I_{c-1}) \rightarrow \Hom
  _{D_{j}}(I_{D_{j}}/I_{D_{j}}^2,I_{c-1}) \rightarrow \ 0 \ .
\end{equation}
Sheafifying, restricting to ${U_{j} \cap X_{c-1}}$ (note that $\cI_{D_{j+1}}$
is also locally free on ${U_{j} \cap X_{c-1}}$) and taking cohomology, we get

{\small
\begin{equation*}\label{47new}
  \to H^1(U_{j},\cO_{X_{c-1}}(a)) \rightarrow  H^1(U_{j},\cH
  om(\cI_{X_{j+1}}/\cI ^2_{X_{j+1}},\cI_{c-1}))  \rightarrow  H^1(U_{j},\cH
  om(\cI_{X_{j}}/\cI ^2_{X_{j}},\cI_{c-1}))   \rightarrow
\end{equation*}}

\noindent Since $\depth_{I(Z_{j})}D_{c-1}\ge 3$, the two latter
$H^{1}$-groups are by (\ref{NM}) isomorphic to 
the $\ _0\! \Ext ^{1}$-groups quoted in \eqref{exteq2} and since
$H^1(U_{j},\cO_{X_{c-1}}(a))=0$ we are done.
\end{proof}
\begin{remark} \label{remassump2} Since \eqref{assump2} holds also for $i=3$
  provided $\dim D_{c-1} \ge 4$, $c \ge 5$ and $a_0 > b_t$ by Theorem
  \ref{dim-mix-det}, we may continue the proof above to see that the
  injections \eqref{exteq2} are isomorphisms for $j \ge 3$. Hence if $X$ is
  general and $n-c \ge 2$, then$$ _0\!
  \Ext^1_{D_{3}}(I_{D_{3}}/I^2_{D_{3}},I_{c-1}) \cong \ _0\!
  \Ext^1_{D_{c-1}}(I_{D_{c-1}}/I^2_{D_{c-1}},I_{c-1}) \ . $$ Here the leftmost
  $\Ext ^{1}$-group is computed much faster by Macaulay 2 than the rightmost
  one. 
  We also get an injection in \eqref{exteq2} for $j = 2$, but now it
  is not necessarily an isomorphism.
\end{remark}
\begin{corollary} \label{cod6} 
  Let $n -c \ge 1$, $c \ge 5$ and suppose $a_{0} > b_{t}$ and $ \quad
  a_{t+3}>a_{t-1}+a_t-b_1$. Then $ \overline {W(\underline{b};\underline{a})}$
  is a generically smooth irreducible component of $ \ \Hi ^p(\PP^{n})$ of
  dimension $\lambda_c + K_3+...+K_c$.
 \end{corollary}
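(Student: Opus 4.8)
The plan is to derive Corollary~\ref{cod6} as a direct application of Theorem~\ref{codcomp}, combined with Theorem~\ref{codcdim0} for the dimension count. First I would observe that the hypotheses $n-c\ge 1$, $c\ge 5$, and $a_0>b_t$ are exactly those required by Theorem~\ref{codcomp}, so the only thing left to verify to conclude that $\overline{W(\underline{b};\underline{a})}$ is a generically smooth irreducible component is the Ext-vanishing assumption of that theorem. By Remark~\ref{3c4}, since $n-c\ge 1$ and $c>4$, the two groups $_0\!\Ext^1_{D_2}(I_{D_2}/I_{D_2}^2,I_2)$ and $_0\!\Ext^1_{D_3}(I_{D_3}/I_{D_3}^2,I_3)$ already vanish by \cite{KM}, and the remaining hypothesis reduces to showing
$$
_0\!\Ext^1_{D_3}(I_{D_3}/I^2_{D_3},I_{i})=0 \quad \mbox{for } i=4,\dots,c-1 .
$$

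Next I would exploit Remark~\ref{remassump2}, which tells us that for $X$ general with $n-c\ge 2$ the group $_0\!\Ext^1_{D_3}(I_{D_3}/I^2_{D_3},I_{c-1})$ is isomorphic to $_0\!\Ext^1_{D_{c-1}}(I_{D_{c-1}}/I^2_{D_{c-1}},I_{c-1})$, reducing everything to a single, more tractable Ext-computation on the top flag member $D_{c-1}$. The strategy is then to estimate this $\Ext^1$ by restricting to the open locus $U_{c-1}=X_{c-1}-Z_{c-1}$ where things are locally free, using the comparison isomorphism \eqref{NM} just as in the proof of Theorem~\ref{codcomp}. The key numerical input is the degree bound $a_{t+3}>a_{t-1}+a_t-b_1$: I expect this inequality to force the relevant graded piece (in degree $0$, i.e. the piece controlling $_0\!\Ext^1$) of the corresponding cohomology group to vanish, by making the smallest degree in which the obstruction could live strictly positive. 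Concretely, one makes the degrees of the minimal generators of $I_{c-1}$ explicit via the Eagon-Northcott complex \eqref{EN}, as was done at the end of the proof of Proposition~\ref{mainnewprop}, and checks that the hypothesis $a_{t+3}>a_{t-1}+a_t-b_1$ pushes all relevant degrees out of the range where the $\Ext^1$ can be nonzero.

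For the dimension statement, once we know $\overline{W(\underline{b};\underline{a})}$ is an irreducible component along which $\Hi^p(\PP^n)$ is generically smooth, its dimension equals $h^0(\cN_X)$ at a general point, which in turn equals $\dim W(\underline{b};\underline{a})$. I would then invoke Theorem~\ref{codcdim0} to evaluate this as $\lambda_c+K_3+\cdots+K_c$: indeed the hypothesis $a_{t+3}>a_{t-1}+a_t-b_1$ together with $b_1\le b_t<a_0\le\cdots$ implies $a_{t+3}>a_{t-2}$ (since $a_{t-1}+a_t-b_1\ge a_{t-2}$ under our ordering assumptions, using $a_t\ge b_1$ and $a_{t-1}\ge a_{t-2}$), which is exactly the numerical condition needed to apply Theorem~\ref{codcdim0} in codimension $c\ge 6$, with the case $c=5$ covered directly by the assumption $a_{t+c-2}=a_{t+3}>a_{t-2}$.

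The main obstacle I anticipate is the Ext-vanishing in the range $4\le i\le c-1$, specifically verifying that the single inequality $a_{t+3}>a_{t-1}+a_t-b_1$ is strong enough to kill $_0\!\Ext^1_{D_{c-1}}(I_{D_{c-1}}/I^2_{D_{c-1}},I_{c-1})$ uniformly. The delicate point is bookkeeping the degrees of the generators of the conormal module $I_{D_{c-1}}/I^2_{D_{c-1}}$ against those of $I_{c-1}$: one must check that the lowest-degree contribution to $\Hom$ (which by the above arguments identifies with a twist of $\cO_{X_{c-1}}$) does not propagate an obstruction into degree $0$ of $\Ext^1$. I expect the stated inequality to be precisely what guarantees the vanishing $H^1(U_{c-1},\cO_{X_{c-1}}(a))=0$ in the degree-$0$ piece, after which the argument closes exactly as in Theorem~\ref{codcomp}.
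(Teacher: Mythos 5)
Your reduction of the smoothness statement to Theorem~\ref{codcomp} via Remark~\ref{3c4}, and of the dimension statement to Theorem~\ref{codcdim0} (including the check that $a_0>b_t$ and $a_{t+3}>a_{t-1}+a_t-b_1$ force $a_{t+3}>a_{t-2}$), is exactly right and matches the paper. The gap is in the one step that actually uses the hypothesis $a_{t+3}>a_{t-1}+a_t-b_1$, namely proving ${}_0\!\Ext^1_{D_3}(I_{D_3}/I^2_{D_3},I_i)=0$ for $i=4,\dots,c-1$. You propose to pass \emph{up} the flag via Remark~\ref{remassump2} to ${}_0\!\Ext^1_{D_{c-1}}(I_{D_{c-1}}/I^2_{D_{c-1}},I_{c-1})$, and this fails on three counts: first, Remark~\ref{remassump2} assumes $n-c\ge 2$ (it needs $\dim D_{c-1}\ge 4$ to get \eqref{assump2} for $i=3$), while the corollary permits $n-c=1$; second, that remark concerns only the target $I_{c-1}$, so the groups with targets $I_4,\dots,I_{c-2}$ are left untreated; third, estimating Ext of the conormal module of the codimension-$(c-1)$ ideal $I_{D_{c-1}}$ requires its relations, which \eqref{EN} does not provide --- indeed the point of that remark is the opposite one, that the $D_3$-group is the accessible one. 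The paper goes \emph{down} the flag instead: by the injections \eqref{exteq2} with $j=2$ (and their analogues inside the truncated flags ending at $X_{i+1}$), it suffices to kill ${}_0\!\Ext^1_{D_2}(I_{D_2}/I^2_{D_2},I_i)$, which is a subgroup of ${}_0\!\Ext^1_R(I_{D_2},I_i)$, and the latter is governed by the short, completely explicit resolution \eqref{EN} of the codimension-$2$ ideal $I_{D_2}$.

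Moreover, the mechanism you assign to the numerical hypothesis is not the correct one. The vanishing $H^1_*(U_j,\cO_{X_{c-1}}(a))=0$ used in the proof of Theorem~\ref{codcomp} is a depth statement, coming from $\depth_{I(Z_j)}D_{c-1}\ge 3$ via \eqref{NM}; it holds in all degrees and involves no inequality among the $a_j$ and $b_i$, so $a_{t+3}>a_{t-1}+a_t-b_1$ cannot be ``precisely what guarantees'' it. The inequality enters only through elementary degree bookkeeping over $R$: by \eqref{EN} the largest degree of a minimal relation of $I_{D_2}$ is $\ell_2-b_1$, while the smallest degree of a minimal generator of $I_i\cong I_{D_{i+1}}/I_{D_i}$ is $\ell_{i+1}-\sum_{j=t-1}^{t+i-2}a_j=\ell_2-a_{t-1}-a_t+a_{t+i-1}$; hence every degree-zero map from the first syzygy module of $I_{D_2}$ to $I_i$ vanishes, and therefore ${}_0\!\Ext^1_R(I_{D_2},I_i)=0$, precisely when $a_{t+i-1}>a_{t-1}+a_t-b_1$. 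The worst case $i=4$ is exactly your hypothesis. Without this comparison (or an equivalent substitute) your argument does not close; with it, the corollary follows along the lines you set up.
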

 \begin{proof} We get $ \dim \overline {W(\underline{b};\underline{a})}$ from
   Theorem~\ref{codcdim0}. Hence by \eqref{exteq2} and Remark~\ref{3c4} it
   suffices to show that $\ _0\! \Ext^1_{R}(I_{D_{2}},I_{i})=0$ for $4 \le i
   \le c-1$ since $\ _0\! \Ext^1_{D_{2}}(I_{D_{2}}/I^2_{D_{2}},I_{i})$ is a
   subgroup of $\ _0\! \Ext^1_{R}(I_{D_{2}},I_{i})$. First let $i=c-1$. By the
   Eagon-Northcott resolution \eqref{EN} we see that the largest possible
   degree of a relation for $I_{D_{2}}$ is $\ell_2-b_1$ and the smallest
   possible degree of a generator of $I_{c-1}\cong I_{D_c}/I_{D_{c-1}}$ is
   $\ell _c-\sum _{j=t-1}^{t+c-3} a_{j}$. Since $\ell _c= \ell _2 +
   \sum_{j=t+1}^{t+c-2}a_j$, we get $\ _0\! \Ext^1_{R}(I_{D_{2}},I_{c-1})=0$
   from $$\ell_2-b_1 < \ell _2 + \sum_{j=t+1}^{t+c-2}a_j-\sum _{j=t-1}^{t+c-3}
   a_{j} = \ell _2 - a_{t-1} -a_t + a_{t+c-2} \ , $$ i.e. from $a_{t+c-2}
   >a_{t-1}+a_t-b_1.$ Since we need the vanishing of $\ _0\!
   \Ext^1_{R}(I_{D_{2}},I_{i})$ for any $i=4,5,...,c-1$, we must suppose
   $a_{t+3} >a_{t-1}+a_t-b_1$ and hence we get the corollary.
\end{proof}

\begin{remark} Note that if $c=3$ (resp. $c=4$) we can argue as above to see
  that the conclusions of Corollary~\ref{cod6} hold provided $a_{t+1}
  >a_{t-1}+a_t-b_1$ (resp. $a_{t+2} >a_{t-1}+a_t-b_1$). This is, however,
  proved in \cite{KM}, Corollary 5.10. For $c \ge 5$, Corollary~\ref{cod6}
  generalizes the corresponding result \cite{KM}, Corollary 5.9 quite a lot.
\end{remark}


\section{Conjectures}

In \cite{K09} the first author discovered a counterexample to
Conjecture~\ref{conj1} for every $c$ in the range $n=c\ge 3$. Indeed the
vanishing all $2 \times 2$ minors of a general $2 \times (c+1)$ matrix of
linear entries defines a reduced scheme  of $c+1$ different points in
$\PP^{c}$. The conjectured dimension of $ W({0,0};{1,1,...,1})$ is
$c(c+1)+c-2$ while its actual dimension is at most $ c(c+1)$.

On the other hand Theorem~\ref{codcdim0} is quite close to proving
Conjecture~\ref{conj1}. The crucial assumption in Theorem~\ref{codcdim0} is
the inequality $ \ a_{t+c-2}>a_{t-2} $ (or $a_{t+3} > a_{t-2}$ if $c > 5$).
Since we, in addition to proving Theorem~\ref{codcdim0}, have computed quite a
lot of examples where we have $ \ a_{t+c-2}=a_{t-2} $ and $a_{i-\min
  ([c/2]+1,t)} > b_{i}$, and each time, except for the counterexample, obtained
\eqref{maineq} and hence the conjecture, we now want to slightly change
Conjecture~\ref{conj1} to
\begin{conjecture} \label{conjnew1} Given integers $a_0\le a_1\le ... \le
  a_{t+c-2}$ and $b_1\le ...\le b_t$, we assume $a_{i-\min ([c/2]+1,t)} \ge
  b_{i}$ provided $n>c$ and $a_{i-\min ([c/2]+1,t)} >
  b_{i}$ provided $n=c$
  for $\min ([c/2]+1,t)\le i \le t$. Except for the family
  $W({0,0};{1,1,...,1})$ of \ {\rm zero-dimensional} schemes above we have,
  for $W(\underline{b};\underline{a}) \ne \emptyset$, that
 \[
 \dim W(\underline{b};\underline{a}) = \lambda_c+ K_3 + K_4+...+K_c \ . \]
\end{conjecture}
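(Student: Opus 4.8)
The plan is to run the same column-deletion induction that proves Theorem~\ref{codcdim0}, invoking Proposition~\ref{main1} at each step, but to push Proposition~\ref{mainnewprop} past its two simplifying hypotheses, namely the strict inequality $a_{t+c-2} > a_{t-2}$ and the condition $a_0 > b_t$. Conjecture~\ref{conjnew1} replaces these by the single depth-type hypothesis $a_{i-\min([c/2]+1,t)} \ge b_i$ (strict when $n=c$), so the first task is to locate precisely where each strict assumption entered and to supply a substitute. The hypothesis $a_0 > b_t$ was used in Proposition~\ref{mainnewprop} only to guarantee \eqref{assump}, and hence $\depth_{I(Z_j)}D_{c-1} \ge 2$ for $0 < j < c-1$. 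I would instead deduce this depth bound from Remark~\ref{dep} applied with $\alpha = \min([c/2]+1, t)$: the resulting estimate $\codim_{X_j}Sing(X_j) \ge \min\{2\alpha-1, j+2\}$ for a general member keeps $\tilde M_j$ and $\cI_{X_j}/\cI^2_{X_j}$ locally free off a set of the required codimension, which is exactly what the comparison map \eqref{NM} needs. This is the step that forces the precise shape of the hypothesis $a_{i-\min([c/2]+1,t)} \ge b_i$ upon us, since it is what the theorem of~\cite{chang} requires to make the flag generically a complete intersection in the right codimensions.

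The decisive point is the final generator count in Proposition~\ref{mainnewprop}. There one has $_0\!\hom(I_{D_1}, I_{c-1}) \cong \dim I_{c-1}(\ell_1)_0$ with $\ell_1 - s(I_{c-1}) = a_{t-1} - a_{t+c-2} \le 0$, so \eqref{maineq} is automatic unless $a_{t-1} = a_{t+c-2}$, in which case the strict inequality $a_{t+c-2} > a_{t-2}$ forced a single minimal generator of $I_{c-1}$ in the smallest degree $s(I_{c-1})$. In the boundary regime $a_{t+c-2} = a_{t-2}$ one has a string $a_{t-2} = a_{t-1} = \cdots = a_{t+c-2}$, and several maximal minors of $\cA$ involving the deleted column attain the minimal degree $s(I_{c-1})$. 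I would read off from the Eagon--Northcott complex \eqref{EN} the exact number of minimal generators of $I_{c-1} \cong I_{D_c}/I_{D_{c-1}}$ of degree $s(I_{c-1})$ and compare it with the number of indices $i \in \{0,\ldots,t-1\}$ for which $a_i = a_{t+c-2}$; note that each such index contributes a term $\binom{a_i-a_{t+c-2}+n}{n} = 1$ on the right of \eqref{maineq}, while the terms with $a_i < a_{t+c-2}$ vanish. The goal is to show that the generator count does not exceed this number of surviving terms, so that \eqref{maineq} persists (possibly strictly) and Proposition~\ref{main1} again yields $\dim W(\underline{b};\underline{a}) = \lambda_c + K_3 + \cdots + K_c$.

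The main obstacle is exactly this boundary bookkeeping, and the conjecture itself marks where a naive version of it breaks down: the family $W(\{0,0\};\{1,\ldots,1\})$ with $n=c$, all $a_j$ equal and $X$ zero-dimensional, where the minimal minors become so redundant that the generator count overshoots and the formula predicts a dimension strictly larger than the true one. The hard part is therefore not a single estimate but a clean dichotomy: to prove that $a_{t+c-2} = a_{t-2}$ produces a sufficient (if strict) inequality in \eqref{maineq} for \emph{every} admissible configuration except this one. I expect the separation to come from positivity of the shifted degrees together with the split between $n > c$ and $n = c$; the zero-dimensional locus $n = c$ is where the count is tightest, which is precisely why the conjecture sharpens the hypothesis to $a_{i-\min([c/2]+1,t)} > b_i$ there and excises the single exceptional family by hand.
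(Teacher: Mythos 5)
This statement is Conjecture~\ref{conjnew1}; the paper does not prove it. Its only support in the paper is the list of known cases ($2\le c\le 5$, $n-c\ge 1$, by \cite{KM}), the partial result Theorem~\ref{codcdim0} (which needs the strict hypotheses $a_0>b_t$ and $a_{t+c-2}>a_{t-2}$, resp. $a_{t+3}>a_{t-2}$), Macaulay~2 experiments, and the observation that the conjecture would follow by induction if \eqref{maineq} always held in the situation of Proposition~\ref{main1}. Your proposal is in the same position: it is a plan, not a proof, and you yourself defer the decisive step (``the goal is to show\dots'', ``I expect the separation to come from\dots''). Moreover, the two concrete substitutions you do propose both break down.

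First, Remark~\ref{dep} cannot replace \eqref{assump}. What Proposition~\ref{mainnewprop} needs is $\depth_{I(Z_j)}D_{c-1}\ge 2$, i.e.\ that the non-l.c.i.\ locus $Z_j$ of $X_j$ meets $X_{c-1}$ in codimension at least $2$ \emph{in $X_{c-1}$}. Remark~\ref{dep} only bounds $\codim_{X_j}Z_j$; since nothing prevents $Z_j$ from lying inside $X_{c-1}$, intersecting costs you up to $c-1-j$ in codimension, and the resulting estimate $\codim_{X_{c-1}}(Z_j\cap X_{c-1})\ge \min\{2\alpha-1,j+2\}-(c-1-j)$ is vacuous for small $j$ once $c\ge 4$ (for $j=1$ it gives $5-c$). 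Controlling this intersection properly is exactly the content of Theorem~\ref{dim-mix-det}, whose proof (specialization from the generic matrix) is where $a_0>b_t$ enters irreplaceably; the paper's remark after Theorem~\ref{codcdim0} points out that the shortcut via Remark~\ref{dep} works only for $c=3$, where the containment $I_t(\varphi_2)\subset I_{t-1}(\varphi_1)$ makes the intersection automatic.

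Second, the boundary bookkeeping you outline would, if completed, have to prove a false inequality. Take $t=2$, $c=3$, all $a_j$ equal and $n>c$. All six maximal minors of $\cA$ have degree $\ell_1$, so $\dim (I_{c-1})_{\ell_1}=\binom{c+1}{2}-\binom{c}{2}=3$, whereas your count of surviving terms $\sum_{i=0}^{t-1}\binom{a_i-a_{t+c-2}+n}{n}$ equals $2$; adding the flag contribution, the route through \eqref{47} and $I_{D_1}$ can only yield $_0\!\hom(I_{D_{c-1}},I_{c-1})\le 4$, strictly larger than the right-hand side $3$ of \eqref{maineq}. Yet this case lies in the range where the conjecture is known by \cite{KM}, so \eqref{maineq} is not violated; the point is that summing termwise bounds along the flag is provably lossy when $a_{t+c-2}=a_{t-2}$, because the restriction maps in \eqref{1}/\eqref{47} are then far from surjective. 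Hence no refinement of the generator count along your lines can close the gap: the missing idea is precisely how to capture this global cancellation, which is the open content of the conjecture (and what the exceptional family $W({0,0};{1,1,\dots,1})$, $n=c$, probes). Finally, note that for $n=c$ even the base of your induction is unavailable, since the known low-codimension cases assume $n-c\ge 1$.
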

Indeed in the situation of Proposition~\ref{main1} we even {\it expect}
$\eqref{maineq}$
to hold! This will imply Conjecture~\ref{conjnew1} provided the conjecture
holds for $W(\underline{b};\underline{a'})$. Note that the conclusion of the
conjecture is true provided $n-c\ge 1$ and $2 \le c \le 5$ ($char(k) = 0$ if
$c=5$) by \cite{KM}.

Finally we will state a conjecture related to the problems (2) and
(3) of the Introduction:
\begin{conjecture} \label{conjcomp} Given integers $a_0\le a_1\le ... \le
  a_{t+c-2}$ and $b_1\le ...\le b_t$, we suppose $n-c\ge 2$, $c \ge 5$ and
   $ \ a_{0} > b_{t}$. 
Then $\overline{
    W(\underline{b};\underline{a})}$ is a generically smooth irreducible
  component of the Hilbert scheme $\Hi ^p(\PP^{n})$.
\end{conjecture}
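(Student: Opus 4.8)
The plan is to read Conjecture~\ref{conjcomp} as exactly Theorem~\ref{codcomp} with its two displayed $\Ext$-vanishing hypotheses removed, so that the whole task reduces to \emph{establishing} those vanishings under only the assumptions $n-c\ge 2$, $c\ge 5$ and $a_0>b_t$. By Remark~\ref{3c4} the range $c\le 4$ is already settled, and for $c>4$ with $n-c\ge 1$ both the group $ _0\!\Ext^1_{D_2}(I_{D_2}/I^2_{D_2},I_2)$ and the $i=3$ group vanish by \cite{KM}; thus the conjecture follows once one proves
$$ _0\!\Ext^1_{D_3}(I_{D_3}/I^2_{D_3},I_i)=0 \quad\text{for } i=4,\dots,c-1. $$
First I would invoke the inductive structure from the proof of Theorem~\ref{codcomp}: by induction on $c$ it suffices to treat $i=c-1$. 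Since $a_0>b_t$ guarantees \eqref{assump2}, and $n-c\ge 2$ gives $\dim D_{c-1}\ge 4$, Remark~\ref{remassump2} applies and upgrades the injections \eqref{exteq2} to isomorphisms for $j\ge 3$. This collapses the entire problem onto the single self-$\Ext$ group, namely
$$ _0\!\Ext^1_{D_3}(I_{D_3}/I^2_{D_3},I_{c-1})\;\cong\; _0\!\Ext^1_{D_{c-1}}(I_{D_{c-1}}/I^2_{D_{c-1}},I_{c-1}), $$
so the conjecture hinges on the vanishing of this one group.

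To attack that vanishing I would pass to the open locus $U_{c-1}=X_{c-1}-Z_{c-1}$, where $\cI_{X_{c-1}}/\cI^2_{X_{c-1}}$ and $\cI_{c-1}$ are locally free. Under \eqref{assump2} we have $\depth_{I(Z_{c-1})}D_{c-1}\ge 3$, so the comparison map \eqref{NM} identifies the target with the sheaf-cohomology group $H^1(U_{c-1},\cH om(\cI_{X_{c-1}}/\cI^2_{X_{c-1}},\cI_{c-1}))$, turning the question into a concrete cohomology vanishing. Reading off the generator and relation degrees from the Eagon-Northcott resolution \eqref{EN}, as in Corollary~\ref{cod6}, gives this vanishing as soon as the crude degree comparison $\ell_2-b_1<\ell_c-\sum_{j=t-1}^{t+c-3}a_j$ holds, i.e.\ whenever $a_{t+3}>a_{t-1}+a_t-b_1$.

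The hard part will be removing that numerical hypothesis, since when $a_{t+3}\le a_{t-1}+a_t-b_1$ the naive degree bound no longer forces the $H^1$ to vanish, even though our Macaulay 2 experiments indicate it still does. To close this gap I would abandon the extreme-degree estimate and instead aim to show directly that the Hilbert-scheme tangent space satisfies $h^0(\cN_X)=\dim W(\underline{b};\underline{a})=\lambda_c+K_3+\dots+K_c$ at a general $X$, the right-hand side being known from Theorem~\ref{codcdim0}; since $\dim\overline{W}$ is always a lower bound for $h^0(\cN_X)$, this equality is precisely what yields generic smoothness and $\overline{W}$ being a component. Concretely I expect one must compute $H^1(U_{c-1},\cH om(\cI_{X_{c-1}}/\cI^2_{X_{c-1}},\cI_{c-1}))$ term by term from the Buchsbaum-Rim resolution of $M_{\cA}$ together with the sequences \eqref{Mi}-\eqref{Di}, rather than bounding it by a single worst-case degree. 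Carrying out this finer cohomological computation uniformly in the $a_j$ and $b_i$, subject only to $a_0>b_t$, is the main obstacle and the reason the statement is posed as a conjecture rather than proved here.
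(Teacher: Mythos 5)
The statement you are asked to prove is Conjecture~\ref{conjcomp}: the paper itself offers no proof of it, only the observation (in the paragraph following the conjecture) that Theorem~\ref{codcomp} together with Remark~\ref{3c4} reduces it to the vanishing of $\ _0\! \Ext ^1_{D_3}(I_{D_3}/I^2_{D_3},I_{i})$ for $i=4,\dots,c-1$, a vanishing the authors expect on the strength of Macaulay 2 evidence but cannot establish in general. Your proposal performs exactly this same reduction --- Theorem~\ref{codcomp}, Remark~\ref{3c4}, the collapse via Remark~\ref{remassump2} onto a single $\Ext^1$-group, and the Eagon--Northcott degree bound of Corollary~\ref{cod6}, which only works under the extra numerical hypothesis $a_{t+3}>a_{t-1}+a_t-b_1$ --- and then correctly and honestly identifies the removal of that hypothesis as the unresolved obstacle. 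So you have not proved the statement, but neither has the paper: your analysis coincides with the authors' own, and your final paragraph accurately locates the precise gap that keeps this a conjecture rather than a theorem.
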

Indeed due to the results of this paper and many examples computed by Macaulay
2 in the range $ \ a_{0} > b_{t} $ we even {\it expect} the groups $ _0\! \Ext
^1_{D_3}(I_{D_3}/I^2_{D_3},I_{i}) \mbox{ for } i=4,...,c-1 \ $ of
Theorem~\ref{codcomp} to vanish! This will imply Conjecture~\ref{conjcomp}.
The conclusion of the conjecture may even be true for $0 \le n-c \le 1$ (we
have no counterexample), but in this range we have verified that the
$\Ext^1$-groups above do not always vanish. Note that the conclusion of
Conjecture~\ref{conjcomp} is true provided $n-c\ge 2$ and $2 \le c \le 4$
(\cite{elli}, \cite{KMMNP}, \cite{KM}).


\end{document}